\newtheorem*{theorem}{Theorem 1}
\newtheorem*{theorema}{Theorem A}
\newtheorem*{theoremb}{Theorem B}
\newtheorem{lemma}{Lemma}[section]
\newtheorem{sublemma}[lemma]{Sublemma}
\newtheorem{prop}[lemma]{Proposition}
\begin{document}
\author{Hiroki Takahasi}

\address{Department of Mathematics,
Keio University, Yokohama,
223-8522, JAPAN} 
\email{hiroki@math.keio.ac.jp}
\subjclass[2010]{37D25, 37E30, 37G25}

\title[Prevalence of non-uniform hyperbolicity] 
{Prevalence of non-uniform hyperbolicity \\
at the first bifurcation of H\'enon-like families}

\begin{abstract}
We consider the dynamics of strongly dissipative H\'enon-like maps in the plane, around the first bifurcation parameter $a^*$
at which the uniform hyperbolicity  
is destroyed by the formation of homoclinic or heteroclinic tangencies inside the limit set.
In [Takahasi, H.: Commun. Math. Phys. {\bf 312} 37-85 (2012)], it was proved that
$a^*$ is a full Lebesgue density point of the set of parameters for which 
Lebesgue almost every initial point diverges to infinity under forward iteration.
For these parameters, we show that
all Lyapunov exponents of all invariant ergodic Borel probability measures are uniformly bounded away from zero,
uniformly over all the parameters.
\end{abstract}
\maketitle

\section{introduction}

Hyperbolicity and structural stability are key concepts in the development of the theory of dynamical systems.
Nowadays, it is known that these two concepts are essentially equivalent to each other, at least for $C^1$ diffeomorphisms or flows of a compact manifold
\cite{Hay97, Mane88,Rob71,Rob73, Rob76}.
Then, a fundamental problem in the bifurcation theory 
is to study transitions from hyperbolic to non hyperbolic regimes. 
Many important aspects of this transition are poorly understood.
If the loss of hyperbolicity is due to the formation of a cycle (i.e., a configuration in the phase space involving 
non-transverse intersections between invariant manifolds), an incredibly rich array of complicated behaviors is unleashed by  the unfolding of the cycle
(for instance, see \cite{PalTak93} and the references therein). 

To study bifurcations of diffeomorphisms, we work within a framework set up by Palis:
consider arcs of diffeomorphisms losing their hyperbolicity through generic bifurcations, and
analyze which dynamical phenomena are more frequently displayed (in the sense
of the Lebesgue measure in parameter space) in the sequel of the bifurcation.
More precisely, let $\{\varphi_a\}_{a\in\mathbb R}$ be a parametrized family of diffeomorphisms which undergoes
a first bifurcation at $a=a^*$, i.e., $\varphi_a$ is hyperbolic for $a>a^*$, and
$\varphi_{a^*}$ has a cycle. We assume $\{\varphi_a\}_{a\in\mathbb R}$ unfolds the cycle generically.
A dynamical phenomenon $\mathcal P$ is {\it prevalent} at $a^*$ if
$$\liminf_{n\to\infty}\frac{1}{\varepsilon}{\rm Leb}
\{a\in[a^*-\varepsilon,a^*]\colon\text{$\varphi_a$ displays $\mathcal P$}\}>0,$$
where {\rm Leb} denotes the one-dimensional Lebesgue measure.

Particularly important is the prevalence of hyperbolicity.
The pioneering work in this direction is due to Newhouse and Palis \cite{NP}, on the bifurcation of Morse-Smale diffeomorphisms.
The prevalence of hyperbolicity (or non hyperbolicity) in arcs of surface diffeomorphisms which are not Morse-Smale has been studied in the literature
\cite{MorYoc10, PT0,PT1,PY1,PY2,PY3}. See \cite{Dia95,DiaRoc98,DiaRoc02} for relevant results in higher dimension.
However, for all these and other subsequent developments, including \cite{Rio01, Tak11}, it is fair to say that a global picture is still
very much incomplete.  It has been realized that prevalent dynamics at the first bifurcation considerably depend upon global properties of the diffeomorphisms before or at the bifurcation parameter.

In  \cite{MorYoc10,PT0,PT1,PY1,PY2,PY3},
unfoldings of tangencies of surface diffeomorphisms
associated to basic sets have been treated.
One key aspect of these models   
 is that the orbit of tangency at the first bifurcation is not contained
in the limit set. This implies a global
control on new orbits added to the underlying basic set, and moreover allows one to use
its invariant foliations to translate dynamical problems to the problem on 
how two Cantor sets intersect each other. 
Then, the prevalence of hyperbolicity is related to the Hausdorff dimension of 
the limit set.
This argument is not viable, if the orbit of tangency,
responsible for the loss of the stability of the system, is contained in the limit set. 
Let us call such a first bifurcation an {\it internal tangency bifurcation}.

 In this paper we are concerned with
an arc  $\{f_a\}_{a\in\mathbb R}$ of diffeomorphisms on $\mathbb R^2$ 
of the form \begin{equation}\label{henon}
f_a\colon(x,y)\mapsto(1-ax^2,0)+b\cdot\Phi(a,b,x,y),\quad 0<b\ll1.\end{equation}
Here $\Phi$ is bounded continuous in $(a,b,x,y)$ and 
$C^4$ in $(a,x,y)$. 
This particular arc, often called an ``H\'enon-like family", 
is embedded in generic one-parameter unfoldings of quadratic homoclinic tangencies associated to dissipative saddles
\cite{MorVia93,PalTak93}, and so is relevant
in the investigation of structurally unstable surface diffeomorphisms.


Let $\Omega_a$ denote the non wandering set of $f_a$.
This is an $f_a$-invariant closed set, which 
is bounded (See Lemma \ref{nws}) and so is a compact set. 
It is known \cite{DevNit79} that
for sufficiently large $a>0$, $f_a$ is Smale's horseshoe map
and $\Omega_a$ admits a hyperbolic splitting into uniformly contracting and expanding subspaces.
As $a$ decreases, the infimum of the angles between these two subspaces gets smaller,
and the hyperbolic splitting disappears at a certain parameter.
This first bifurcation is an internal tangency bifurcation. Namely, 
for sufficiently small $b>0$ there exists
a parameter $a^*=a^*(b)$ near $2$ with the following properties \cite{BedSmi04,BedSmi06,CLR08,DevNit79}.:
\begin{itemize}

\item if $a>a^*$, then $\Omega_a$ is a hyperbolic set, i.e.,
there exist constants $C>0$, 
$\xi\in(0,1)$ and at each $x\in\Omega_a$ a non-trivial decomposition $T_x\mathbb R^2=E^s_x\oplus
E^u_x$ with the invariance property such that
$\|D_xf_a^n|E^s_x\|\leq C\xi^{n}$ and $\|D_xf_a^{-n}|E^u_x\|\leq C\xi^{n}$ for every $n\geq0$;

\item 
there is a quadratic tangency $\zeta_0$ near $(0,0)$, between stable and unstable manifolds
of the fixed points of $f_{a^*}$. This tangency is homoclinic when $\det Df_{a^*}>0$ and heteroclinic when 
$\det Df_{a^*}<0$ (See FIGURE 1). 
The orbit of this tangency is accumulated by transverse homoclinic points, 
and so is contained in the limit set.

\end{itemize}

The orbit of tangency of $f_{a^*}$ is in fact unique (See Theorem B),
and $\{f_a\}_{a\in\mathbb R}$ unfolds this unique tangency 
generically. The next theorem gives a partial description of prevalent dynamics at $a=a^*$.

\begin{figure}
\begin{center}
\includegraphics[height=6.5cm,width=14cm]
{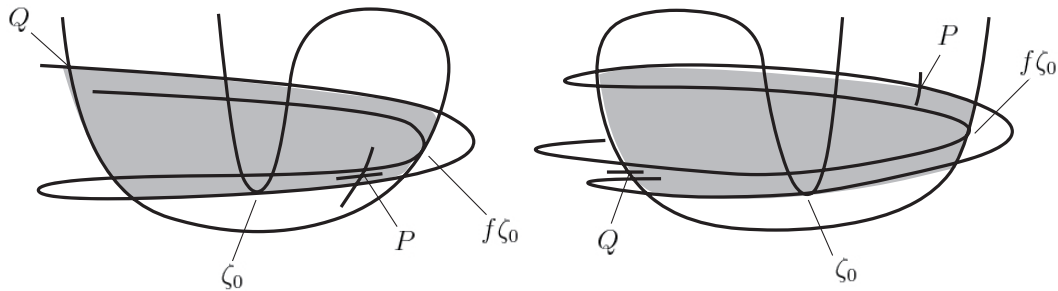}
\caption{Organization of invariant manifolds for $a=a^*$. There exist two
fixed saddles $P$, $Q$ near $(1/2,0)$, $(-1,0)$
respectively. In the case $\det Df>0$ (left),
the stable and unstable manifolds of $Q$ meet each other tangentially. In the case $\det Df<0$
(right), the stable manifold of $Q$ meets the unstable manifold of $P$ tangentially. The shaded
regions represent the region $R$ (see Sect.\ref{local}).}
\end{center}
\end{figure}

\begin{theorem}{\rm (\cite[Theorem]{Tak11})} 
For sufficiently small $b>0$ there exist $\varepsilon_0=\varepsilon_0(b)>0$ and a set $\Delta\subset[a^*-\varepsilon_0,a^*]$ of $a$-values  containing $a^*$ 
with the following properties: 
\begin{itemize}
\item[(a)] $\displaystyle{\lim_{\varepsilon\to+0}} (1/\varepsilon){\rm Leb}(
\Delta\cap[a^*-\varepsilon,a^*])=1;$

\item[(b)] if $a\in\Delta$, then the Lebesgue measure of the set $K_a^+:=\{x\in\mathbb R^2\colon \text{$\{f^n_ax\}_{n\in\mathbb N}$ is bounded}\}$
is zero. In particular, for Lebesgue almost every $x\in\mathbb R^2$,
$|f_a^nx|\to\infty$ as $n\to\infty$.

\end{itemize}
\end{theorem}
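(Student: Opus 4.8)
The plan is to regard $f_a$, inside the region $R$ near the tangency, as a small perturbation of the one-dimensional quadratic map $q_a\colon x\mapsto 1-ax^2$, and to run a parameter-exclusion scheme of Benedicks--Carleson / Mora--Viana type that is \emph{anchored at the endpoint} $a^*$. Since $0<b\ll1$ and $f_a$ is strongly dissipative, the return dynamics in $R$ is $C^2$-close to $q_a$, and $a^*(b)$ is close to the Chebyshev value $a=2$, for which the critical orbit $0\mapsto 1\mapsto -1$ of $q_2$ lands in two steps on the repelling fixed point $-1$; this ``maximally non-recurrent'' behaviour is what makes the tangency orbit of $f_{a^*}$ unique (Theorem~B) and forces the ``critical orbit'' of $f_{a^*}$ to be Misiurewicz-like. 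Because $f_a$ is a diffeomorphism, not an endomorphism, there is no genuine critical point, so I would first carry out the now-standard recursive construction of a ``critical point'' $\zeta=\zeta(a)$ on the unstable manifold of the relevant fixed point --- the point about which the relevant piece of $W^u$ folds quadratically --- together with a reference tangent vector $v_0$ at the critical value, and establish the bounded-distortion and ``binding-period'' estimates describing how long a near-critical orbit must shadow the orbit of $\zeta$ before regaining exponential growth. The cases $\det Df>0$ and $\det Df<0$ (homoclinic vs.\ heteroclinic tangency) run in parallel.

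The heart of the proof is an inductive exclusion of parameters at a fixed scale $\delta>0$. Having retained those $a$ for which $\mathrm{dist}(f_a^n\zeta(a),\zeta(a))\ge e^{-\alpha n}$ and $\|Df_a^n v_0\|\ge\lambda^n$ for all $n<N$, one discards the $a$ for which step $N$ forces a return to $\zeta(a)$ deeper than allowed; a large-deviation count bounds the measure removed at combinatorial depth $N$ by at most $e^{-cN}\varepsilon$. Property (a) then comes from the anchoring: at $a=a^*$ the critical orbit stays a definite distance $\rho_0>0$ from $\zeta(a^*)$ (it is forward-asymptotic to the saddle near $(-1,0)$), while $a\mapsto f_a^n(\zeta(a))$ moves with speed $\asymp\lambda^n$; hence, as $a$ ranges over $[a^*-\varepsilon,a^*]$, no return to scale $\delta$ is possible before a time $n_0(\varepsilon)\asymp\log(1/\varepsilon)$, so the entire excluded set in that window has measure at most $\varepsilon\sum_{n\ge n_0(\varepsilon)}e^{-cn}=o(\varepsilon)$, which is exactly the statement $\mathrm{Leb}(\Delta\cap[a^*-\varepsilon,a^*])/\varepsilon\to1$. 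That $a^*\in\Delta$ is immediate, its critical orbit satisfying every condition trivially.

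For (b), fix $a\in\Delta$. Propagating the basic-assumption and exponential-growth estimates along the critical orbit through the free-return/binding decomposition equips the maximal invariant set in $\overline R$, and hence $K_a^+$ up to a countable union of backward iterates of that set, with a dominated splitting into a one-dimensional unstable direction carrying a \emph{uniform} positive Lyapunov exponent and a strongly contracted transverse direction; so this set is non-uniformly hyperbolic. At $a=a^*$ the unstable manifold is tangent to $\partial R$, so for $a<a^*$ a definite ``gap'' opens through which $f_a(R)$ exits $R$: the unstable leaves through the non-escaping set are then uniformly short (they cannot cross the gap) and sweep out zero area, while the associated stable foliation is absolutely continuous, and together these give $\mathrm{Leb}(K_a^+)=0$. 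Finally, since $\Omega_a$ is bounded (Lemma~\ref{nws}) and, for $a$ near $2$, any orbit leaving a fixed large disc escapes monotonically to infinity (a property of $q_a$ inherited by $f_a$), $\mathrm{Leb}(K_a^+)=0$ forces $|f_a^n x|\to\infty$ for Lebesgue-almost every $x$.

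The main obstacle, as always in arguments of this kind, is the transfer from phase space to parameter space: one must show that $a\mapsto f_a^n(\zeta(a))$ crosses the critical region with derivative comparable to the phase-space expansion $\|Df_a^n v_0\|$, so that metric control of the recurrence of the orbit of $\zeta$ in $R$ becomes metric control of the excluded parameter sets. This needs the delicate ``splitting algorithm'' that tracks the most-expanded direction through each return, uniform $C^4$ control of the family throughout, and --- the crux for (a) --- all of these estimates pushed down to scales $\delta$ comparable with $\varepsilon$, which is precisely what makes anchoring the whole scheme at $a^*$ the technically decisive point.
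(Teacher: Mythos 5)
First, a caveat on the comparison: the paper does not prove this theorem at all — it is quoted from \cite{Tak11} — so your proposal has to be measured against that cited proof, which is indeed a Benedicks--Carleson/Mora--Viana type parameter exclusion anchored at $a^*$, with critical points constructed inductively on $W^u$, binding periods, and phase-to-parameter transversality. Your sketch of part (a) has the right shape (density one at $a^*$ because the tangency orbit at $a^*$ is asymptotic to the saddle near $(-1,0)$, so deep returns cannot occur before time $\asymp\log(1/\varepsilon)$ and the excluded measure is $o(\varepsilon)$), modulo the standard but nontrivial point that the exclusion must be run over the whole (countable, Cantor-like) set of critical points constructed alongside the induction, not a single $\zeta(a)$.

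The genuine gap is in part (b). (i) Your escape mechanism is premised on ``at $a=a^*$ the unstable manifold is tangent to $\partial R$, so for $a<a^*$ a gap opens''. But the tangency at $a^*$ is \emph{internal}: the parabolic tip of $W^s(Q)$ is tangent to a piece of $W^u$ lying strictly inside $R$, while the boundary curve of $R$ formed by $W^u$ still crosses $W^s(Q)$ transversally, so the escaping lens $\Theta_\infty$ (the set $f^{-1}(\mathbb R^2\setminus R)\cap R$) is already nondegenerate at $a=a^*$. If the gap opened only for $a<a^*$, then $R$ would be forward invariant at $a^*$ and $K_{a^*}^+$ would contain $R$, contradicting the very statement you are proving, since $a^*\in\Delta$; so as written your argument fails exactly at the most important parameter. (ii) More fundamentally, the step ``non-uniform hyperbolicity of the non-escaping set $\Rightarrow{\rm Leb}(K_a^+)=0$'' is not valid: for the classical Benedicks--Carleson parameters of the H\'enon family the critical orbits satisfy precisely the recurrence and growth conditions you impose, yet there is an attractor and the set of points with bounded forward orbit has positive Lebesgue measure. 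What is needed, and what the cited proof supplies, is a quantitative escape estimate in phase space: using expansion outside binding periods and bounded distortion along nearly horizontal curves (available because $a\in\Delta$ controls the recurrence of \emph{all} critical points), one shows that on each horizontal curve the set of points whose orbit avoids $\Theta_\infty$ up to time $n$ has length decaying summably in $n$, and then a Fubini argument gives ${\rm Leb}(K_a^+)=0$. Your alternative — short unstable leaves plus an ``absolutely continuous stable foliation'' — does not substitute for this: absolute continuity of a lamination is compatible with it saturating a positive-measure set, so the conclusion still hinges on showing that the transversal trace of the non-escaping set is null, which is exactly the estimate your proposal omits.
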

In addition, 
if $a\in\Delta$ then $f_a$ is transitive on $\Omega_a$ (See Lemma \ref{K}).
In other words, for ``most" diffeomorphisms immediately right after the first bifurcation,
the topological dynamics is similar to that of Smale's horseshoe before the bifurcation.


The statement and the proof of the above theorem tell us that the dynamics of the diffeomorphisms in $\{f_a\colon a\in\Delta\}$ is fairly structured, and 
this may yield (at least) a weak form of hyperbolicity.
A natural question is the following:
$$\text{{\it To what extent the dynamics is hyperbolic for the diffeomorphisms in $\{f_a\colon a\in\Delta\}$?}}$$
The main result of this paper gives one answer for this question. 
For measuring the extent of hyperbolicity we estimate Lyapunov exponents, the asymptotic exponential rates at which
nearby orbits are separated. 

If there is no fear of confusion, we drop the dependence on $a$ from notation and write $f=f_a$, $\Omega=\Omega_a$ etc.
Let us say that a point $x\in\Omega$ is {\it regular} if there exist number(s) $\chi_1<\cdots<\chi_{r(x)}$
and a decomposition $T_x\mathbb R^2=E_1(x)\oplus\cdots\oplus E_{r(x)}(x)$ such that 
for every $v\in E_i(x)\setminus\{0\}$,
$$\lim_{n\to\pm\infty}\frac{1}{n}\log\Vert D_xf^nv\Vert=\chi_i(x)\quad\text{and}\quad
\lim_{n\to\pm\infty}\frac{1}{n}\log|\det D_xf^n|=\sum_{i=1}^{r(x)}\chi_i(x){\rm dim}E_i(x).$$
By the theorem of Oseledec \cite{Ose68}, the set of regular points has total probability.
If $\mu$ is ergodic, then the functions $x\mapsto r(x)$, $\lambda_i(x)$, ${\rm dim}E_i(x)$ are invariant along orbits,
and so are constant $\mu$-a.e. 
From this and the Ergodic Theorem, 
 one of the following holds for each ergodic measure $\mu$:

\begin{itemize}
\item there exist two numbers $\chi^s(\mu)<\chi^u(\mu)$, and for $\mu$-a.e. $x\in\Omega$ a decomposition
$T_x\mathbb R^2=E^s_x\oplus E^u_x$
such that for any $v^\sigma\in E_x^\sigma\setminus\{0\}$ and
$\sigma=s$, $u$,
$$\lim_{n\to\pm\infty}\frac{1}{n}\log\Vert D_xf^nv\Vert=\chi^\sigma(\mu)\quad\text{and}\quad\int\log|\det Df|d\mu=\chi^s(\mu)+\chi^u(\mu);$$

\item there exists $\chi(\mu)\in\mathbb R$ such that for $\mu$-a.e. $x\in\Omega$
and all $v\in T_x\mathbb R^2\setminus\{0\}$,

$$\lim_{n\to\pm\infty}\frac{1}{n}\log\Vert D_xf^nv\Vert=\chi(\mu)\quad\text{and}\quad\int\log|\det Df|d\mu=2\chi(\mu).$$
\end{itemize}
The number(s) $\chi^s(\mu)$ and $\chi^u(\mu)$, or $\chi(\mu)$ is called a {\it Lyapunov exponent(s)}
of $\mu$.

Let $\mathcal M^e(f)$ denote the set of $f$-invariant Borel probability measures
which are ergodic.
We call $\mu\in\mathcal M^e(f)$ a {\it hyperbolic measure} if $\mu$ has two Lyapunov exponents
 $\chi^s(\mu)$, $\chi^u(\mu)$ with $\chi^s(\mu)<0<\chi^u(\mu)$.
 There is a well-known theory \cite{Kat80,Pes76, Rue79} which allows 
 one to have a fairly good description of the dynamics relative to each hyperbolic measure.
Our main theorem indicates a strong form of non-uniform hyperbolicty for $a\in\Delta$.
\begin{theorema} For sufficiently small $b>0$, the following holds for all
$f\in\{f_a\colon a\in\Delta\}$: \begin{itemize}
\item[(a)]  each $\mu\in\mathcal M^e(f)$ is a hyperbolic measure;

\item[(b)] for each $\mu\in\mathcal M^e(f)$, 
\begin{equation*}\label{them}\chi^s(\mu)<\frac{1}{3}\log b<0<\frac{1}{4}\log2<\chi^u(\mu).\end{equation*}
\end{itemize}
\end{theorema}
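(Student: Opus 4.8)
The plan is to reduce all of Theorem~A to the single uniform estimate
$$\chi^{+}(\mu)>\tfrac14\log 2\qquad\text{for every }\mu\in\mathcal M^{e}(f)\text{ and every }a\in\Delta,$$
where $\chi^{+}(\mu)$ denotes the largest Lyapunov exponent of $\mu$ (and $\chi^{-}(\mu)$ the smallest, so that $\chi^{+}=\chi^{-}$ in the one–exponent case). Indeed, by Oseledec's theorem and the Ergodic Theorem recalled above one has $\chi^{+}(\mu)+\chi^{-}(\mu)=\int\log|\det Df|\,d\mu$ in either alternative; and since the form \eqref{henon} makes $\det Df$ equal to $b$ times a factor that is bounded away from $0$ and $\infty$ uniformly on the (uniformly bounded, by Lemma~\ref{nws}) set $\Omega$, we get $\int\log|\det Df|\,d\mu=\log b+O(1)$ with the $O(1)$ uniform in $a\in\Delta$ and in $b$. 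Hence $\chi^{+}(\mu)>0$ and $\chi^{+}(\mu)+\chi^{-}(\mu)=\log b+O(1)<0$ force $\chi^{-}(\mu)<0<\chi^{+}(\mu)$; in particular $\mu$ has two distinct exponents (so the one–exponent alternative does not occur) and $\chi^{-}(\mu)=\log b+O(1)-\chi^{+}(\mu)<\log b-\tfrac14\log 2+O(1)$, which is $<\tfrac13\log b<0$ as soon as $b$ is small enough that $\tfrac23|\log b|$ beats the constant. Thus $\mu$ is hyperbolic with $\chi^{s}(\mu)=\chi^{-}(\mu)<\tfrac13\log b$ and $\chi^{u}(\mu)=\chi^{+}(\mu)>\tfrac14\log 2$, which is exactly (a) and (b).

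To prove $\chi^{+}(\mu)>\tfrac14\log 2$ I would fix a small neighbourhood $U$ of the compact set $\Omega$ together with a narrow cone field $\mathcal C^{u}$ around the horizontal direction on $U$, so thin that it is disjoint from the complementary stable cone field and contains no stable Oseledec direction. Away from a small critical region $R$ around the tangency point $\zeta_{0}$ — the shaded region of Figure~1 — the strong dissipation $b\ll1$ makes $f$ a $C^{1}$–small perturbation of the one–dimensional endomorphism $x\mapsto1-ax^{2}$ with $a$ close to $2$, which is uniformly expanding off a neighbourhood of its critical point; consequently on $\Omega\setminus R$ the cone $\mathcal C^{u}$ is $Df$–forward invariant and every vector in it is expanded by a factor comfortably larger than $\sqrt2$. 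The problem is then reduced to controlling a vector of $\mathcal C^{u}$ during the (for an invariant measure, recurrent) visits to $R$, where one application of $Df$ may contract the candidate unstable direction.

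On such visits I would run a binding argument: when an orbit enters $R$ it is shadowed by the orbit of $\zeta_{0}$, and the expansion lost at the quadratic fold is repaid while the orbit stays close to the critical orbit. The decisive feature of the \emph{first} bifurcation, which is what makes the constants explicit and uniform in $a\in\Delta$, is that by Theorem~B the tangency orbit of $f_{a^{*}}$ is unique and of Misiurewicz (internal) type: it falls onto the fixed saddle $Q$ after a bounded number of steps, where $Df$ expands by roughly the multiplier $\approx4$ of $Q$. A $\mathcal C^{u}$–curve entering $R$ within distance $\delta$ of $\zeta_{0}$ therefore stays bound for $\sim 2\log(1/\delta)/\log 4$ iterates, over which the tangential loss $\sim\delta$ is more than recovered, the net expansion rate over the bound block being $\gtrsim\tfrac12\log 4=\log 2$. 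Quantifying this via the description of $R$ and of the passage through it from \cite{Tak11} — now bookkeeping expansion rates rather than escape — should yield a uniform ``eventual expansion with bounded delay'' bound $\|D_{x}f^{n}|_{\mathcal C^{u}_{x}}\|_{\min}\ge e^{cn-C_{0}}$ for all $x\in\Omega$, all $n\ge1$ and all $a\in\Delta$, with $c\in(\tfrac14\log 2,\tfrac12\log 2)$ and $C_{0}$ absolute. Given this, superadditivity of $n\mapsto\log\|D_{x}f^{n}|_{\mathcal C^{u}_{x}}\|_{\min}$ (the cone field being essentially forward invariant) and Kingman's theorem give that $\tfrac1n\log\|D_{x}f^{n}|_{\mathcal C^{u}_{x}}\|_{\min}$ converges $\mu$–a.e.\ to a limit which is both $\ge c$ and equal to $\chi^{+}(\mu)$ — all directions of the thin forward–invariant cone align with the top Oseledec line, which is legitimate since $\mathcal C^{u}$ avoids $E^{s}$ — so $\chi^{+}(\mu)\ge c>\tfrac14\log 2$, as needed.

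The main obstacle is precisely the uniform fold–and–recovery estimate of the previous paragraph. It requires sharp $C^{2}$ (distortion) control on how $\mathcal C^{u}$–curves fold while traversing $R$, the construction of a critical set inside $R$ together with binding–time estimates, and — crucially — the verification that every constant is independent of $a\in\Delta$ and of the small parameter $b$. This is the technical heart of the argument, and is where the analysis of \cite{Tak11}, which established the measure–theoretic escape of Theorem~1 but only implicitly the expansion along orbits, has to be sharpened.
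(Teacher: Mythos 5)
Your reduction of both items of Theorem A to the single lower bound $\chi^{+}(\mu)>\frac14\log 2$ is sound (note it only uses the upper bound $|\det Df|\le C_0b$; the two-sided estimate $\int\log|\det Df|\,d\mu=\log b+O(1)$ you assert is neither guaranteed by \eqref{henon} nor needed), and it is parallel in spirit to Lemma \ref{simple}. The genuine gap is in the way you propose to prove that lower bound. The claimed uniform ``expansion with bounded delay'' estimate $\|D_xf^n|_{\mathcal C^u_x}\|_{\min}\ge e^{cn-C_0}$ for \emph{all} $x\in\Omega$, all $n\ge1$, with an absolute $C_0$, is false for $a=a^*\in\Delta$: together with forward invariance of a horizontal cone and strong dissipation it would make $\Omega_{a^*}$ a uniformly hyperbolic set (this is exactly the cone criterion, cf.\ \cite[Lemma 7.3]{WanYou01} as invoked in the proof of Proposition \ref{hyperbolicity}), contradicting the fact that the orbit of the tangency $\zeta_0$ lies inside $\Omega_{a^*}$. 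Concretely, the horizontal cone is not $Df$-invariant inside $I(\delta)$ (a horizontal vector at the fold acquires large slope), at the tangency the stable direction itself is nearly horizontal so no fixed horizontal cone avoids $E^s$ on all of $\Omega$, and the time needed to recover expansion and horizontality after a return --- the bound period $p(x)$ of Proposition \ref{geo}(IIa) --- is unbounded as the return point approaches the critical set, so there is no bounded delay $C_0$. Your Misiurewicz-type picture (binding to the single orbit of $\zeta_0$, which lands on the saddle after boundedly many steps) is also too naive: returns have to be bound to a Cantor set $\mathcal C$ of critical points on $W^u$, whose own recurrence is only controlled through (IIb), orbits make arbitrarily deep returns, and Theorem B concerns only $f_{a^*}$, so it cannot supply uniformity over $a\in\Delta$. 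Finally, the Kingman step is not available as stated: superadditivity of $\log\|D_xf^n|_{\mathcal C^u_x}\|_{\min}$ fails at the folds, and the identification of the cone rate with $\chi^{+}(\mu)$ is unjustified precisely at deep returns where the unstable Oseledec direction leaves any fixed horizontal cone.

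What replaces your uniform claim in the paper is intrinsically non-uniform: Proposition \ref{dich} shows that for every $x\in\Omega$ either, after a point-dependent (and unbounded) shift $\bar\nu$, the horizontal vector grows at rate $\frac{\log 2}{4}$ at infinitely many times, or there are infinitely many close-return times $\nu_l$ which at least double ($\nu_{l+1}\ge 2\nu_l$), with the same growth rate over each block; the doubling is what makes the point-dependent prefix $\nu_0+\cdots+\nu_l$ negligible relative to $\nu_{l+1}$. This pointwise dichotomy is then converted into $\chi^u(\mu)\ge\frac14\log2$ not through a cone/Kingman argument but through the Pesin-type tempering estimate of Lemma \ref{zeropesin}, which gives the matching upper bound $e^{\epsilon k+\epsilon|m|+\epsilon|n|+\chi^u(\mu)n}$ along $\mu$-a.e.\ orbit and lets $\epsilon\to0$. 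If you weaken your central estimate to such a dichotomy with unbounded, point-dependent delays and supply a tempering lemma in place of Kingman, your outline can be completed; as written, the estimate it hinges on cannot hold, since it would imply the uniform hyperbolicity that the paper explicitly rules out.
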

It must be emphasized that this kind of uniform bounds on Lyapunov exponents of ergodic measures are compatible with the non hyperbolicity 
of the system, and therefore, Theorem A does not imply the uniform hyperbolicity for $a\in\Delta$.
Indeed, $a^*\in\Delta$ and $f_{a^*}$ is genuinely non hyperbolic, due to the existence of tangencies.
See \cite{CLR08,CLR06} for the first examples of non hyperbolic surface diffeomorphisms of this kind. 
We suspect that the dynamics is non hyperbolic for 
all, or ``most" parameters in $\Delta$.

Little is known on the prevalence of hyperbolicity at internal tangency bifurcations.
The only previously known result in this direction is due to Rios \cite{Rio01},
on certain horseshoes in the plane with three branches. 
However, certain hypotheses in \cite{Rio01} on expansion/contraction rates and curvatures of invariant manifolds
near the tangency, are no longer true for $\{f_a\}_{a\in\mathbb R}$ due to the strong dissipation.

The study of Lyapunov exponents of ergodic measures in the context of homoclinic bifurcations of surface diffeomorphisms traces back to
\cite{CLR08,CLR06}. In higher dimension, the emergence of ergodic measures with zero Lyapunov exponents in unfoldings of heterodimensional cycles
was studied in \cite{BonDiaGor10,DiaGor09}.
For smooth one-dimensional
maps with critical points, the existence of a uniform lower bound on Lyapunov exponents of ergodic measures
is equivalent to several other conditions
 \cite{NowSan98,PrzRivSmi03}, including the Collet-Eckmann Condition which is known to hold for positive Lebesgue measure set of parameters
 \cite{BenCar91}. It would be nice to show more advanced properties of $f_a$, $a\in\Delta$.

\section{Ideas and organization of the proof of Theorem A}

A proof of Theorem A is briefly outlined as follows.
\medskip

\noindent {\it Step 1.} We show that for diffeomorphisms in
$\{f_a\colon a\in\Delta\}$, 
any ergodic measure $\mu$ has two Lyapunov exponents $\chi^s(\mu)<\chi^u(\mu)$ with 
$\chi^s(\mu)<0\leq\chi^u(\mu)$.
Let us call these two numbers a {\it negative} and a {\it nonnegative Lyapunov 
exponent} of $\mu$ respectively.
We show the uniform upper bound on 
negative Lyapunov exponents.
\medskip

\noindent{\it Step 2.} We show the uniform lower bound on nonnegative Lyapunov exponents.
\medskip

Step 1 is fairly easy, and relies on the strong dissipation and the nonexistence of hyperbolic attracting periodic point
for diffeomorphisms in $\{f_a\colon a\in\Delta\}$.
 This is done in Sect.\ref{ups}.

Step 2 is much more involved. As the Oseledec decomposition adapted to a given ergodic measure is not known 
a priori, we analyze the growth of derivatives directly.
All the difficulties come from the folding behavior of the map inside a small fixed neighborhood
$I(\delta)$ of the origin, called a {\it critical region} (See Sect.\ref{initial}).
It is true that, due to the uniform expansion outside of $I(\delta)$ (See Lemma \ref{hyp}), there is a uniform lower bound 
on nonnegative Lyapnov exponents of ergodic measures
whose supports do not intersect $I(\delta)$. 
However, the tangency for $a=a^*$ is accumulated by transverse homoclinic points, 
and thus $I(\delta)$ contains transverse homoclinic points for $a<a^*$ close to $a^*$. 
Then the Poincar\'e-Birkhoff-Smale theorem implies the existence of ergodic measures whose supports intersect $I(\delta)$.
In order to treat nonnegative Lyapunov exponents of these measures, one must treat returns of points to $I(\delta)$.

 We now give a more precise description of Step 2. 
 To treat ergodic measures whose support intersect $I(\delta)$, 
a key ingredient is
 the next proposition which exhausts all possible patterns of growth of derivatives along 
a forward orbit of {\it any} non wandering point. For 
 $x\in \mathbb R^2$ and $n\geq1$ let $w_n(x)=D_{fx}f^{n-1}\left(\begin{smallmatrix}1\\0\end{smallmatrix}\right)$.

\begin{prop}\label{dich}
Let $f\in\{f_a\colon a\in\Delta\}$.
For any  $x\in\Omega$ one of the following holds:
\begin{itemize}
\item[(a)] there exists $\bar\nu\geq0$ such that
$\|w_n(f^{\bar\nu}x)\|\geq e^{\frac{\log2}{4}(n-1)}$
for infinitely many $n\geq1$;

\item[(b)] there exists a sequence $\{\nu_l\}_{l=0}^\infty$ of nonnegative integers such that;

\begin{itemize}

\item[(b-i)] 
$\|w_{\nu_{l+1}}(f^{\nu_0+\cdots+\nu_l}x)\|\geq e^{\frac{\log2}{4}(\nu_{l+1}-1)}$ for every $l\geq0$;

\item[(b-ii)] $\nu_1>0$, and $\nu_{l+1}\geq2\nu_l$ for every $l\geq1$.
\end{itemize}

\end{itemize}
\end{prop}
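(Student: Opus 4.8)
The plan is to follow the forward orbit $\{f^nx\}_{n\ge0}$, record its returns to the critical region $I(\delta)$, and control the growth of $\|w_n\|$ by combining two inputs from the preceding sections: the uniform expansion of the horizontal direction outside $I(\delta)$ (Lemma~\ref{hyp}), which makes $\|w_n\|$ grow at rate at least $\frac{\log2}{4}$ along the ``free'' stretches of the orbit, and the bound-period estimates, according to which a return of depth $\beta$ (at distance $\asymp e^{-\beta}$ from the critical set) is followed by a bound period of length $p\asymp\beta$ during which the relevant vector first loses a factor $\asymp e^{-\beta}$ and then recovers, ending once more in the (nearly horizontal) expanding cone, having meanwhile grown at the rate of the critical orbit, which is $\ge\frac{\log2}{4}$ for $a\in\Delta$. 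I also use that for $a\in\Delta$ there is no hyperbolic attracting periodic point, which prevents $\|w_n(f^{\bar\nu}x)\|$ from decaying to $0$ along the whole orbit; the only $x\in\Omega$ for which the latter could fail are attracting periodic points.

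Assume alternative~(a) fails. First fix a base point $\bar\nu$ at which $w$ is (nearly) horizontal, hence in the expanding cone: take $\bar\nu=0$ unless $x$ lies in the deep part of $I(\delta)$, in which case let $\bar\nu$ be the end of the bound period triggered by $x$. Walking along the orbit of $f^{\bar\nu}x$ and splitting it into its free stretches (growth at rate $\ge\frac{\log2}{4}$ by Lemma~\ref{hyp}) and its return-plus-bound-period stretches, one observes that if the orbit made only finitely many returns to $I(\delta)$, or if the return depths stayed bounded, then the accumulated estimate would yield $\|w_n(f^{\bar\nu}x)\|\ge e^{\frac{\log2}{4}(n-1)}$ for infinitely many $n$, contradicting the failure of~(a); so the return depths are unbounded. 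More is true: failure of~(a) forces the orbit to make, infinitely often, a return that is \emph{as deep as the accumulated derivative allows}, i.e.\ of depth $\beta\asymp\log\|w_t\|$ at the return time $t$, since otherwise the losses $\asymp e^{-\beta}$ would be negligible against the accumulated expansion and again $\|w_n(f^{\bar\nu}x)\|\ge e^{\frac{\log2}{4}(n-1)}$ would recur infinitely often.

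I now build the blocks for~(b). Let $t_1<t_2<\cdots$ be such a sequence of maximally deep return times, of depths $\beta_1<\beta_2<\cdots$; set $\nu_0=t_1$ and $\nu_{l+1}=t_{l+1}-t_l$, so that $T_l:=\nu_0+\cdots+\nu_l=t_l$. For (b-i): within the block $[t_l,t_{l+1}]$ the vector $w$, restarted horizontally at $f^{t_l}x$, first passes through the bound period of the return at $t_l$ (growth at the critical rate $\ge\frac{\log2}{4}$), then through free stretches and at most shallow returns (net rate $\ge\frac{\log2}{4}$), which gives $\|w_{\nu_{l+1}}(f^{t_l}x)\|\ge e^{\frac{\log2}{4}(\nu_{l+1}-1)}$. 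For (b-ii): a maximally deep return of depth $\beta$ drops the derivative from $\asymp e^{\beta}$ to $\asymp1$, and the ensuing bound period — which terminates exactly when the recovered derivative reaches the macroscopic scale $\delta$ — brings it back up to $\asymp e^{2\beta}$, so across a block the logarithm of the accumulated derivative at least doubles, whence $\beta_{l+1}\gtrsim2\beta_l$; since $\nu_{l+1}$ is, up to bounded and lower-order terms, proportional to $\beta_l$ (it contains the bound period of length $\asymp\beta_l$), a careful choice of the constants turns this into $\nu_{l+1}\ge2\nu_l$, and $\nu_1>0$ because the orbit does return to $I(\delta)$.

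The main obstacle is precisely this last quantitative step: converting ``a maximally deep return squares the accumulated derivative, so depths, bound periods and block lengths at least double'' into the clean inequality $\nu_{l+1}\ge2\nu_l$ while keeping the exact exponent $\frac{\log2}{4}$ in (b-i). This requires the sharp form of the bound-period estimate (loss $\asymp e^{-\beta}$, recovery terminating at the macroscopic scale after $p\asymp\beta$ steps), the depth-versus-derivative constraint, and a meticulous accounting of the constants and lower-order terms so that the degraded rate survives every concatenation; the most delicate point throughout is to keep $w$ inside the expanding cone at the junctions between free and bound stretches, so that the growth estimates may be multiplied — which is exactly what the bound-period analysis guarantees, since at the end of each bound period $w$ has returned to the (nearly horizontal) expanding cone.
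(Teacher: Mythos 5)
There is a genuine gap, and it sits exactly where you flagged it. Your selection of the block times is based on a notion of ``maximally deep return'' of depth $\beta\asymp\log\|w_t\|$, claimed to be forced infinitely often by the failure of (a). For an arbitrary non-wandering point this is neither well defined nor true: nothing prevents the orbit of $x\in\Omega$ from returning much deeper than the accumulated derivative allows (even landing essentially on a critical point), in which case the vector $w$ is rotated to nearly vertical and crushed, the bound-period recovery does not apply, and the squaring heuristic (``loss $e^{-\beta}$, recovery to $e^{2\beta}$'') collapses. The proposition is stated with a \emph{fresh horizontal vector} $w_{\nu_{l+1}}(f^{\nu_0+\cdots+\nu_l}x)$ at the start of each block precisely to avoid any recovery claim across such returns. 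The paper's substitute for your ``maximal depth'' notion is a time-indexed one: a \emph{close return} at time $n$ (measured from the current reference point) means $f^nx\in\mathcal B^{(n)}(\delta^{\frac{n}{2}})$, a critical neighborhood shrinking with the elapsed time $n$. With that definition the dichotomy is automatic (either the point is eventually controlled forever, giving (a) via Lemma \ref{conduir}, or there are infinitely many close returns, and Lemma \ref{conduir} applied on each block gives (b-i) at rate $\lambda/3>\frac{\log 2}{4}$); no ``only shallow returns inside a block'' assumption is needed, since the binding estimate of Proposition \ref{geo}(IIa) handles every non-close return.

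More importantly, the doubling $\nu_{l+1}\ge 2\nu_l$ is not obtained in the paper from any derivative-squaring accounting, and your admitted inability to turn the $\asymp$-comparabilities into the clean inequality is not a matter of bookkeeping constants: a different idea is used. A close return at time $\nu_l$ places the point within $2\delta^{\frac{\nu_l}{2}}$ of a critical point $\zeta$; Lemma \ref{segment} (using only $\|Df\|\le 5$ on $R$ and the escape structure of Lemma \ref{nws}) shows the orbit shadows the critical orbit of $\zeta$ within $\delta^{\frac{\nu_l}{3}}$ for the next $2\nu_l$ iterates; and the slow recurrence of critical orbits, Proposition \ref{geo}(IIb) together with the chain estimate (IIc), keeps $f^n\zeta$ at distance at least $\kappa_0^{\alpha n}-Cb^{\frac{\alpha n}{3}}$ from the relevant critical points, which dominates $\delta^{\frac{\nu_l}{3}}+2\delta^{\frac{n}{2}}$ for $n\le 2\nu_l$. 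Hence no close return can occur before time $2\nu_l$. This geometric shadowing-plus-slow-recurrence argument is the missing ingredient in your proposal; without it (or an equivalent), step (b-ii) does not go through.
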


We now explain how to obtain from Proposition \ref{dich} the desired uniform lower bound on nonnegative Lyapunov exponents.
The next estimate of growth of derivatives is an adaptation of Pesin's result \cite{Pes76}.
It is not particular to the H\'enon-like map $f$ but also holds for any $C^1$ diffeomorphism on a two-dimensional manifold 
admitting an ergodic Borel probability measure with two Lyapunov exponents.
\begin{lemma}\label{zeropesin}
Let $\mu\in\mathcal M^e(f)$ and suppose that $\mu$ has two Lyapunov exponents $\chi^s(\mu)<\chi^u(\mu)$. 
For any $\epsilon>0$ there exists a Borel set $\Lambda(\epsilon)\subset\Omega$ such that 
$\mu(\Lambda(\epsilon))=1$, and
for all $x\in\Lambda(\epsilon)$ there exists $k\in\mathbb N$ such that
for any $v\in T_x\mathbb R^2$, every $m,n\in\mathbb Z$,
\begin{equation}\label{pesineq}
\|D_{f^mx}f^nv\|\leq e^{\epsilon k+\epsilon |m|+\epsilon |n|+\chi^u(\mu)n}\|v\|.
\end{equation}
\end{lemma}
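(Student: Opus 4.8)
The plan is to deduce \eqref{pesineq} from Oseledec's multiplicative ergodic theorem \cite{Ose68} through Pesin's adapted--metric (Lyapunov chart) construction. Only operator norms of the full derivative cocycle are involved, so the argument is the classical one for two-dimensional $C^1$ systems and uses nothing about the H\'enon-like form of $f$, as the statement already signals. First I would apply Oseledec's theorem to the derivative cocycle of $f$ over $(\Omega,f,\mu)$: since $\Omega$ is compact and $f$ is $C^1$, the functions $\log\|D_xf^{\pm1}\|$ are bounded on $\Omega$, the cocycle is integrable, and there is an $f$-invariant Borel set $\Omega_0\subset\Omega$ with $\mu(\Omega_0)=1$ carrying a measurable splitting $T_x\mathbb R^2=E^s_x\oplus E^u_x$, together with the subexponential control on the angle $\angle(E^s_x,E^u_x)$ and on the normalizing coordinates along orbits that the theorem provides. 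In particular $\frac1n\log\|D_xf^n\|\to\chi^u(\mu)$ as $n\to\pm\infty$ for every $x\in\Omega_0$, $\chi^u(\mu)$ being the top exponent.

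Next, for the given $\epsilon$ I would invoke the $\epsilon/2$-adapted inner product $\langle\cdot,\cdot\rangle'_x$ on $T_x\mathbb R^2$, $x\in\Omega_0$, with its two standard features: (i) the operator norm of $D_xf$ measured in $\langle\cdot,\cdot\rangle'$ is at most $e^{\chi^u(\mu)+\epsilon/2}$, because $E^u_x$ and $E^s_x$ are nearly orthogonal in this metric and are expanded, resp.\ contracted, at rates within $e^{\pm\epsilon/2}$ of $e^{\chi^u(\mu)}$, resp.\ $e^{\chi^s(\mu)}<e^{\chi^u(\mu)}$; and (ii) the comparison function $D\colon\Omega_0\to[1,\infty)$ determined by $D(x)^{-1}\|v\|\le\|v\|'_x\le D(x)\|v\|$ is $\epsilon/2$-tempered along orbits, $D(f^{\pm1}x)\le e^{\epsilon/2}D(x)$. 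Iterating (i) gives
$$\|D_xf^nv\|'_{f^nx}\le e^{(\chi^u(\mu)+\epsilon/2)n}\|v\|'_x\qquad(x\in\Omega_0,\ n\ge0,\ v\in T_x\mathbb R^2).$$
Property (ii) is the one step that is not pure bookkeeping: it is exactly where the merely asymptotic rate supplied by Oseledec's theorem has to be upgraded to an $\epsilon$-slowly-varying bound, and it is the point I expect to be the main (though classical) obstacle, being the ``tempering'' step in the results of Pesin \cite{Pes76} cited around the statement.

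To finish, I would set $\Lambda(\epsilon):=\Omega_0$ and, for $x\in\Lambda(\epsilon)$, pick $k=k(x)\in\mathbb N$ with $D(x)^2\le e^{\epsilon k}$. For $m\in\mathbb Z$, $n\ge0$ and $v\in T_{f^mx}\mathbb R^2$, applying the displayed inequality at the base point $f^mx$, converting back and forth between $\langle\cdot,\cdot\rangle'$ and $\|\cdot\|$ at $f^{m+n}x$ and $f^mx$ via (ii), and using $D(f^jx)\le e^{(\epsilon/2)|j|}D(x)$ together with $|m+n|+|m|\le 2|m|+n$, one reads off
$$\|D_{f^mx}f^nv\|\le D(f^{m+n}x)\,e^{(\chi^u(\mu)+\epsilon/2)n}\,D(f^mx)\,\|v\|\le e^{\epsilon k+\epsilon|m|+\epsilon|n|+\chi^u(\mu)n}\|v\|,$$
which is \eqref{pesineq} in the range $n\ge0$; this range is all that is used afterwards, through the quantities $w_n$. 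The case $n\le0$ then follows by running the same construction for $f^{-1}$ and intersecting $\Omega_0$ with the corresponding full-measure set.
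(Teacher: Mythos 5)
Your proposal is correct in the range $n\ge 0$ (all $m\in\mathbb Z$), which is the only range in which the lemma is actually used later (through $w_n$), but it reaches the estimate by a genuinely different route. The paper does not construct an adapted metric: it defines closed sets $\Lambda_k$ directly by explicit inequalities on $\|D_{f^mx}f^n|\tilde E^\sigma_{f^mx}\|$ and on $\angle(\tilde E^s,\tilde E^u)$, proves $\mu\bigl(\bigcup_k\Lambda_k\bigr)=1$ from Oseledec by exploiting the one-dimensionality of the bundles (the identity $\|D_{f^mx}f^n|E^\sigma\|=\|D_xf^{m+n}|E^\sigma\|/\|D_xf^m|E^\sigma\|$ plays the role of your tempering step), and then bounds $\|D_{f^mx}f^nv\|$ by decomposing $v$ in the basis $e^s_x,e^u_x$ and estimating the coefficients by the inverse square of the angle; the integer $k$ absorbs the Oseledec constant $C(x)$ exactly as your $k$ absorbs $D(x)^2$. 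Your version instead invokes the Lyapunov inner product with the two standard features (operator norm of $D_xf$ at most $e^{\chi^u(\mu)+\epsilon/2}$, and an $\epsilon/2$-tempered comparison function $D$), and you correctly identify the tempering-kernel lemma as the one nontrivial ingredient. Your route buys generality (any dimension, no angle bookkeeping), while the paper's computation is elementary, self-contained in dimension two, and yields the Borel structure of $\Lambda(\epsilon)$ for free via the closedness of the $\Lambda_k$; with $\Lambda(\epsilon)=\Omega_0$ you should still say a word about why your full-measure set (where $D$ is finite and tempered) can be taken Borel, though this is routine.

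One caveat: your final sentence does not work. Running the construction for $f^{-1}$ controls backward iterates by the top exponent of the inverse cocycle, which is $-\chi^s(\mu)$, not $-\chi^u(\mu)$; it yields $\|D_{f^mx}f^{n}v\|\le e^{\epsilon k+\epsilon|m|+\epsilon|n|+\chi^s(\mu)n}\|v\|$ for $n\le 0$, which is weaker than \eqref{pesineq}. In fact \eqref{pesineq} is false for $n<0$ whenever $\epsilon<\chi^u(\mu)-\chi^s(\mu)$: take $v\in E^s_{f^mx}$, whose backward growth rate is $e^{\chi^s(\mu)n}$. This is not a defect of yours alone: the last inequality of the paper's own proof uses $e^{\chi^s(\mu)n}\le e^{\chi^u(\mu)n}$, which likewise requires $n\ge 0$, and only $n\ge 0$ is ever needed. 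So your argument covers exactly the range where the statement is true and applied; simply drop the appeal to $f^{-1}$ rather than claim it settles $n\le 0$.
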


\begin {proof}
Given $\epsilon>0$, for each $k\in\mathbb N$, $k\geq1$,
define $\Lambda_k=\Lambda_k(\epsilon)$ to be the set of points
$x\in \Omega$ for which there is a nontrivial splitting $T_x\mathbb R^2=\tilde E_x^s\oplus \tilde E_x^u$ with the invariance property $D_xf\tilde E^\sigma_x=\tilde E^\sigma_{fx}$
$(\sigma=s,u)$, and the following estimates for every $m,n\in\mathbb Z$:

$$\|D_{f^mx}f^n|\tilde E^s_{f^mx}\|\leq e^{(\epsilon/2) k+(\epsilon/3)|m|+(\epsilon/3)|n|+\chi^s(\mu)n};$$
$$\|D_{f^mx}f^n|\tilde E^u_{f^mx}\|\leq e^{(\epsilon/2) k+(\epsilon/3)|m|+(\epsilon/3)|n|+\chi^u(\mu)n};$$
$$\angle(\tilde E^s_{f^mx},\tilde E^u_{f^mx})\geq e^{-(\epsilon/2) k-(\epsilon/3)|m|}.$$

Set $\Lambda(\epsilon)=\bigcup_{k=1}^\infty\Lambda_k$. It is easy to show that $\Lambda_k$ is a closed set.
Hence, $\Lambda(\epsilon)$ is a Borel set. 
We show $\mu(\Lambda(\epsilon))=1.$ 
From the theorem of Oseledec \cite{Ose68},
for $\mu$-a.e. $x\in \Omega$ and $\sigma=s$, $u$,
$$\lim_{n\to\pm\infty}\frac{1}{n}\log\|D_xf^n|E^\sigma_x\|=\chi^\sigma(\mu)$$
and
$$\lim_{m\to\pm\infty}\frac{1}{m}\log \sin\angle(E^s_{f^mx},E^u_{f^mx})=0.$$

If we fix $\epsilon>0$ then for  each $x$ there exists $N(x)>0$ such that
$$e^{-(\epsilon/3)|n|+\chi^\sigma(\mu)n}\leq\|D_xf^n|E^\sigma_x\|\leq e^{(\epsilon/3) |n|+\chi^\sigma(\mu)n}\ \ 
\text{for every } |n|\geq N(x)$$
and
$$\angle(E^s_{f^mx},E^u_{f^mx})\geq e^{-(\epsilon/3) |m|}\ \ 
\text{for every } |m|\geq N(x).$$
Define $C(x)\geq1$ to be the smallest constant such that 
$$\frac{1}{C(x)}e^{-(\epsilon/3) |n|+\chi^\sigma(\mu)n}\leq\|D_xf^n|E^\sigma_x\|\leq C(x)
e^{(\epsilon/3) |n|+\chi^\sigma(\mu)n}\ \ \text{for every } n\in\mathbb Z$$
and
$$\angle(E^s_{f^mx},E^u_{f^mx})\geq \frac{1}{C(x)}e^{-(\epsilon/2) |m|}\ \ 
\text{for every } |m|\geq N(x).$$
The invariance gives $D_{f^mx}f^n|E^\sigma_{f^mx}=D_{x}f^{m+n}|E^\sigma_{x} \circ (D_{x}f^m|E^\sigma_{x})^{-1}$.
Since the bundle $E^\sigma$ is one-dimensional,
we have
\begin{align*}\|D_{f^mx}f^n|E^\sigma_{f^mx}\|&=\|D_{x}f^{m+n}|E^\sigma_{x}\| \cdot \|(D_{x}f^m|E^\sigma_{x})^{-1}\|\\
&=\frac{\|D_{x}f^{m+n}|E^\sigma_{x}\|}{\|D_{x}f^m|E^\sigma_{x}\|}\leq C(x)^2e^{(2\epsilon/3)|m|+(\epsilon/3) |n|+\chi^\sigma(\mu)n}\\
&\leq \frac{1}{3\pi}e^{(\epsilon/2) k+(2\epsilon/3)|m|+\epsilon |n|+\chi^\sigma(\mu)n},\end{align*}
and
$$\angle(E^s_{f^mx},E^u_{f^mx})\geq e^{-(\epsilon/2) k}e^{-(\epsilon/3)|m|},$$
provided $C(x)^2\leq (1/3\pi)e^{(\epsilon/2) k}$.
Hence $x\in\Lambda_k$ with $\tilde E_{f^mx}^\sigma=E_{f^mx}^\sigma$. 
This yields $\mu(\Lambda(\epsilon))=1.$

 We prove \eqref{pesineq}.
 Let $x\in\Lambda$ and $k\in\mathbb N$ be such that $x\in\Lambda_k$. 
 Take a unit vector $e^\sigma_x$ 
spanning $E_x^\sigma$ ($\sigma=s,u$) so that $\langle e^s_x,e^u_x\rangle>0$,
where the bracket denotes the standard inner product.
Let $v\in T_{f^mx}\mathbb R^2$ be a unit vector.
 Split $v=\xi^ue^u_x+\xi^se^s_x$. 
 It is not hard to see
  $$\max\{|\xi^s|,\ |\xi^u|\}\leq\frac{3\pi}{2}\cdot\frac{1}{(\angle(E_{f^mx}^s,E_{f^mx}^u))^2}.$$
 Using the above estimates
 and  the assumption $\chi^s(\mu)<\chi^u(\mu)$ we obtain
 \begin{align*}\|D_{f^mx}f^nv\|
 &\leq|\xi^s|\cdot\|D_{f^mx}f^n|E^s_{f^mx}\|+|\xi^u|\cdot\|D_{f^mx}f^n|E^u_{f^mx}\|\\
&\leq  \frac{1}{2}e^{\epsilon k+\epsilon|m|+\epsilon |n|+\chi^s(\mu)n}+
\frac{1}{2}e^{\epsilon k+\epsilon|m|+\epsilon |n|+\chi^u(\mu)n}\\
&< e^{\epsilon k+\epsilon|m|+\epsilon |n|+\chi^u(\mu)n}. \qedhere\end{align*}
\end{proof}
\medskip

\noindent{\it Remark.} 
In Lemma \ref{zeropesin} we do not assume $0\notin\{\chi^s(\mu),\chi^u(\mu)\}$.
\medskip

Returning to the proof of Theorem A,
let $f\in\{f_a\colon a\in\Delta\}$ and $\mu\in\mathcal M^e(f)$.
Fix $\epsilon>0$. Consider a point $x\in\Lambda(\epsilon)$.
We first treat the case where Proposition \ref{dich}(a) holds for $x$.
Then, for infinitely many $n\geq1$ we have
$$e^{\epsilon k}e^{\epsilon(\bar\nu+1)}e^{(\chi^u(\mu)+\epsilon)(n-1)}\geq
\|w_n({f^{\bar\nu}x})\|\geq e^{\frac{\log2}{4}(n-1)},$$
where the first inequality follows from Lemma \ref{zeropesin},
and the second from Proposition \ref{dich}(a).
Taking logs of both sides and rearranging the result gives
$$\chi^u(\mu)\geq\frac{1}{4}\log2-\epsilon-\epsilon\cdot \frac{k+\bar\nu+1}{n-1}.$$
Letting $n\to\infty$ we get
\begin{equation}\label{low1}\chi^u(\mu)\geq\frac{1}{4}\log2-\epsilon.\end{equation}

We now treat the case where Proposition \ref{dich}(b) holds.
Then, there exists a sequence $\{\nu_l\}_{l=0}^\infty$ of nonnegative integers such that
$$
e^{\epsilon k}e^{\epsilon(\nu_0+\cdots+\nu_l+1)}e^{(\chi^u(\mu)+\epsilon)(\nu_{l+1}-1)}\geq
\|w_{\nu_{l+1}}(f^{\nu_0+\cdots+\nu_l}x)\|\geq e^{\frac{\log2}{4}(\nu_{l+1}-1)},$$
where the first inequality follows from Lemma \ref{zeropesin} and the second from Proposition \ref{dich}(b).
Taking logs of both sides and rearranging the result gives
$$\chi^u(\mu)\geq\frac{1}{4}\log2-\epsilon-\epsilon\cdot \frac{k+\nu_0+\cdots+\nu_l+1}{\nu_{l+1}-1}.$$
Since $\nu_1>0$ and $\nu_{l+1}\geq2\nu_l$ for every $l\geq1$,
$$\nu_1+\cdots+\nu_l\leq \nu_{l+1}\sum_{i=1}^l2^{-i}\leq \nu_{l+1}.$$
There exists $l_0=l_0(k,\nu_0)$ such that if $l\geq l_0$, then
$$ \frac{k+\nu_0+\cdots+\nu_l+1}{\nu_{l+1}-1}\leq 2.$$
Plugging this into the previous inequality yields
\begin{equation}\label{low2}\chi^u(\mu)\geq\frac{1}{4}\log2-3\epsilon.\end{equation}
Since $\epsilon>0$ can be chosen arbitrarily small, from \eqref{low1} \eqref{low2} we obtain 
$$\chi^u(\mu)\geq\frac{1}{4}\log2.$$
Since $\mu$ is arbitrary, we obtain the uniform lower bound on nonnegative Lyapunov exponents.
\medskip

\noindent{\it Remark.} Since $\Phi$ in \eqref{henon} is assumed to be bounded,
the H\'enon family $H_a\colon(x,y)\mapsto (1-ax^2-\sqrt{b}y,\pm\sqrt{b}x)$
 does not have the form in \eqref{henon}.
However,
from \cite[Proposition 2.1]{BedSmi06} there exists a square which contains the non wandering 
set of $H_a$ with $(a,b)$ close to $(2,0)$. Hence, one can modify $H_a$
outside of the square so that the resultant family has the form in \eqref{henon}.
As a result, the same statements as in Theorem A hold for the H\'enon family as well.
\medskip

The rest of this paper consists of two sections.
In Sect.3 we prove Proposition \ref{dich}, and complete the proof of Theorem A. 
In Sect.4 we 
show that the tangency at $a=a^*$ is unique,
in the sense that any homoclinic or heteroclinic point other than $f_{a^*}^n\zeta_0$ $(n\in\mathbb Z)$ is transverse (Theorem B).  

\section{Proof of Theorem A}
In this section we finish the proof of Theorem A. 
 In Sect.\ref{ups} we obtain
the desired uniform upper bound on negative Lyapunov exponents.
In Sect.\ref{local} we define a compact domain containing the non wandering set,
and use it to show the transitivity (Lemma \ref{K}).
 The rest of this section is entirely dedicated to the proof of  Proposition \ref{dich}.

\subsection{Uniform upper bound on negative Lyapunov exponents}\label{ups}
We say $p\in\mathbb R^2$ is a {\it periodic point} of $f$ if there exists $n>0$ such that
$f^np=p$. The smallest $n$ with this property is denoted by $\pi(p)$ and 
 called the {\it period} of $p$. 
A periodic point $p$ is called {\it hyperbolic attracting} if 
 all the eigenvalues of 
$D_pf^{\pi(p)}$ are strictly contained in the unit circle.



\begin{lemma}\label{simple}
If there is no hyperbolic attracting periodic point of  $f$, then any $\mu\in\mathcal M^e(f)$
has two Lyapunov exponents $\chi^s(\mu)<\chi^u(\mu)$.
In addition, $\chi^s(\mu)\leq (1/3)\log b<0\leq\chi^u(\mu).$
\end{lemma}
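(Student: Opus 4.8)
The plan is to exploit the strong dissipation, which forces $\int \log|\det Df|\,d\mu$ to be very negative, together with the hypothesis ruling out hyperbolic attracting periodic points, which prevents both Lyapunov exponents from being negative. First I would recall that from \eqref{henon}, since $0<b\ll1$ and $\Phi$ is bounded, the Jacobian satisfies $|\det D_xf|\leq b^{4/3}$ (say) uniformly in $x$; more crudely, $\log|\det D_xf|\leq \frac{4}{3}\log b$ pointwise, hence $\int\log|\det Df|\,d\mu\leq\frac{4}{3}\log b$ for every $\mu\in\mathcal M^e(f)$. Now suppose $\mu$ has a single Lyapunov exponent $\chi(\mu)$. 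Then $2\chi(\mu)=\int\log|\det Df|\,d\mu\leq\frac43\log b<0$, so $\chi(\mu)=\frac12\int\log|\det Df|\,d\mu\leq\frac23\log b<0$. This means $\mu$ is supported on a set where all vectors contract exponentially under forward iteration. I would then argue this is impossible under the no-attracting-periodic-point hypothesis: a measure with a single negative exponent is hyperbolic (trivially, with $E^u$ empty), so by the Pesin/Katok theory one can find a periodic point $p$ near $\operatorname{supp}\mu$ whose derivative $D_pf^{\pi(p)}$ is close to $D_xf^{\pi(p)}$ for a generic $x$; since the latter has both eigenvalues inside the unit circle (both exponents equal $\chi(\mu)<0$), so does $D_pf^{\pi(p)}$, making $p$ hyperbolic attracting — a contradiction. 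Alternatively, and more elementarily, one can invoke the fact (standard for dissipative maps, cf. the argument via Oseledec plus a closing-type lemma, or simply that a uniformly contracting invariant compact set must be a single periodic orbit, which is then attracting) that the support of such a $\mu$ contains an attracting periodic orbit. Either way the single-exponent case is excluded, so $\mu$ has two distinct exponents $\chi^s(\mu)<\chi^u(\mu)$.

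Next I would establish the two displayed inequalities. Since $\chi^s(\mu)<\chi^u(\mu)$ and $\chi^s(\mu)+\chi^u(\mu)=\int\log|\det Df|\,d\mu\leq\frac43\log b$, we get $2\chi^s(\mu)<\chi^s(\mu)+\chi^u(\mu)\leq\frac43\log b$, hence $\chi^s(\mu)<\frac23\log b$, and a fortiori $\chi^s(\mu)\leq\frac13\log b<0$ for $b$ small (indeed $\frac23\log b<\frac13\log b<0$ when $b<1$). For the inequality $\chi^u(\mu)\geq0$: if instead $\chi^u(\mu)<0$, then both exponents are negative, and we are back in the situation where $\operatorname{supp}\mu$ is a uniformly (non-uniformly) contracting invariant set, which again produces a hyperbolic attracting periodic point by the same closing argument, contradicting the hypothesis. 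Hence $\chi^u(\mu)\geq0$, completing the chain $\chi^s(\mu)\leq\frac13\log b<0\leq\chi^u(\mu)$.

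The main obstacle is the step asserting that an ergodic measure all of whose exponents are negative forces the existence of a hyperbolic attracting periodic point. For this I would either cite the appropriate version of Katok's closing lemma / Pesin theory for $C^{1+\alpha}$ (here $C^4$) surface diffeomorphisms — which gives periodic points shadowing $\mu$-generic orbits with nearby derivative cocycle, so that the contraction of $\mu$ transfers to the periodic orbit — or, if one wants to stay self-contained, prove directly that a compact $f$-invariant set on which $\|D_xf^n\|\to0$ uniformly (obtained from negativity of the top exponent via a subadditive/uniformity argument on a suitable $\Lambda_k$ as in Lemma \ref{zeropesin}) must contain a sink. I expect the paper handles this via the second, hands-on route, using the strong dissipation to get genuinely uniform contraction on Pesin blocks and then a standard fixed-point/graph-transform argument to locate the attracting periodic orbit; the dissipation ($b$ small) is exactly what makes the $E^u$-direction harmless in that argument.
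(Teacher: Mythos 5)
Your proposal follows essentially the same route as the paper: the all-negative-exponent case (which covers both the single-exponent case and the possibility $\chi^u(\mu)<0$) is excluded via Katok's theory producing a hyperbolic attracting periodic point (the paper cites the proof of Theorem 4.2 in \cite{Kat80} rather than your ``hands-on'' alternative), and the bound on $\chi^s(\mu)$ comes, exactly as in the paper, from $2\chi^s(\mu)\le\chi^s(\mu)+\chi^u(\mu)=\int\log|\det Df|\,d\mu$ plus the smallness of the Jacobian. The one slip is your pointwise bound $|\det D_xf|\le b^{4/3}$, which goes the wrong way since the Jacobian of \eqref{henon} is genuinely of order $b$ (one only has $|\det Df|\le Cb\le b^{2/3}$ for $b$ small); this is harmless, because $\int\log|\det Df|\,d\mu\le\tfrac{2}{3}\log b$ still yields $\chi^s(\mu)\le\tfrac{1}{3}\log b$.
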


\begin{proof}
From the proof of  \cite[Theorem 4.2]{Kat80} we know that ergodic measures whose Lyapunov exponents
are all negative are supported on orbits of hyperbolic attracting periodic points.
Hence, under the assumption of Lemma \ref{simple}, any $\mu\in\mathcal M^e(f)$ has at least one nonnegative Lyapunov exponent.

If $\mu$ has only one Lyapunov exponent, then
$\chi(\mu)=(1/2)\int\log|Df|d\mu<0$, a contradiction. Hence, $\mu$ has two Lyapunov exponents $\chi^s(\mu)<\chi^u(\mu)$.
Since $|\det Df|\leq Cb$ for some $C>0$ independent of $b$, we have
$$2\chi^s(\mu)\leq \chi^s(\mu)+\chi^u(\mu)=\int\log|\det Df|d\mu\leq\log C+\log b,$$
which yields the desired inequality inequality for sufficiently small $b$. 
 \end{proof}

Diffeomorphisms in $\{f_a\colon a\in\Delta\}$ has no hyperbolic attracting periodic point,
for otherwise the Lebesgue measure of the set $K_a^+$ is positive.
Hence, Lemma \ref{simple} yields the desired uniform upper bound on negative Lyapunov exponents 
of ergodic measures.

\subsection{The non wandering set}\label{local}
A periodic point $p\in\mathbb R^2$ is called a \emph{saddle} if one eigenvalue of $D_pf^{\pi(p)}$
has norm bigger than one and the other smaller than one.
For a saddle $p$, denote by $W^s(p)$ and $W^u(p)$ its stable and unstable manifolds
respectively.

For $(a,b)\in\mathbb R^2$ close to $(2,0)$, $f_a$ may be viewed as a singular perturbation of 
the endomorphism $(x,y)\mapsto(1-2x^2,0)$, having exactly two fixed points
$P=(1/2,0)$, $Q=(-1,0)$ which are repelling. Hence 
$f_a$ has exactly two fixed saddles close to $P$ and $Q$, denoted by
 $P_a$ and $Q_a$ respectively.
 If there is no fear of confusion,
we write $P=P_a$, $Q=Q_a$ 
with a slight abuse of notation.
If $\det Df>0$, then let $W^u=W^u(Q)$. 
If $\det Df<0$, then let $W^u=W^u(P)$.

Since any invariant probability measure is supported on a subset of the non wandering set, 
the next lemma allows us to restrict our consideration to a certain compact domain.
\begin{lemma}\label{nws}
For sufficiently small $b>0$ there exists $\varepsilon_0=\varepsilon_0(b)>0$ such that
for all
 $f\in\{f_a\colon a\in[a^*-\varepsilon_0,a^*]\}$ there exists a compact domain $R$  
 located near $\{(x,0)\in\mathbb R^2\colon |x|\leq1\}$, bordered by two curves in $W^s(Q)$ and two in $W^u$,
with the following properties:

\begin{itemize}

\item[(a)] If $x\in R$ and $fx\notin R$, then $f^{n}x\notin R$ for every $n>1$ and $|f^nx|\to+\infty$ as 
$n\to+\infty$;

\item[(b)] $\Omega\subset R$.
\end{itemize}
\end{lemma}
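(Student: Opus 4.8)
The statement to prove is Lemma~\ref{nws}: constructing a compact domain $R$ near $\{(x,0):|x|\le 1\}$, bordered by arcs of $W^s(Q)$ and $W^u$, that traps the non-wandering set and has the "escape" property (a).

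\medskip

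The plan is to build $R$ explicitly from pieces of the invariant manifolds of the fixed saddles, exploiting the fact that for $(a,b)$ near $(2,0)$ the map $f_a$ is a strongly dissipative perturbation of the one-dimensional map $x\mapsto 1-ax^2$, whose dynamics on $[-1,1]$ is well understood (it is, at $a=2$, the full quadratic map with an absolutely continuous invariant measure, and its restriction to the interval is conjugate to the tent map). First I would recall the geometry at $a=a^*$: there are two fixed saddles $P$ near $(1/2,0)$ and $Q$ near $(-1,0)$, with strong stable directions nearly vertical and unstable directions nearly horizontal, so $W^u(Q)$ (resp.\ $W^u(P)$ when $\det Df<0$) is a nearly-horizontal curve that, followed forward, sweeps across a horizontal strip of width $O(b)$ containing $[-1,1]\times\{0\}$, while $W^s(Q)$ consists of nearly-vertical curves. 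The domain $R$ is then the ``curvilinear rectangle'' bounded on the top and bottom by two forward iterates of a compact arc of $W^u$ and on the left and right by two arcs of $W^s(Q)$ (this is precisely the shaded region drawn in FIGURE~1). The key is to choose these four boundary arcs so that $R$ is forward-invariant except through its two vertical sides, and so that $\Omega\subset R$.

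\medskip

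The main steps, in order, would be: (i) From \cite{BedSmi04,BedSmi06,CLR08,DevNit79} (or \cite[Proposition 2.1]{BedSmi06}, already cited in the Remark above) extract that there is a fixed compact box $B$ containing $\Omega_a$ for all $a$ in a neighbourhood of $a^*$; this reduces everything to the dynamics inside $B$. (ii) Show that outside a small horizontal strip $S$ of height $O(\sqrt b)$ around $[-1,1]\times\{0\}$, every point of $B$ is mapped by some forward iterate out of $B$ and then to infinity --- this uses the horseshoe-type expansion estimate (the analogue of Lemma~\ref{hyp}, the uniform expansion outside $I(\delta)$) together with the quadratic map's property that points of $[-1,1]$ near $\pm 1$ or outside are ejected. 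Hence $\Omega\subset S\cap B$. (iii) Inside $S$, take a long compact arc $\gamma\subset W^u$ through the relevant fixed point; its forward images $f^N\gamma$ for suitable $N$ give two graph-like curves over the $x$-interval that bound $\Omega$ above and below --- here one invokes the $\lambda$-lemma / inclination lemma so that $f^N\gamma$ $C^1$-approximates $W^u$ and stays in $S$, and uses that $W^u$ is dense enough in the non-wandering set (transitivity on $\Omega$, cf.\ Lemma~\ref{K}, or the explicit manifold structure from the cited papers) that no non-wandering point lies outside the region between these curves. (iv) For the vertical sides, pick two arcs of $W^s(Q)$ crossing $S$ to the left of $x=-1$-ish and to the right of $x=1$-ish, chosen so that their union with the two $W^u$-arcs bounds a topological disc $R$; since these are pieces of invariant manifolds, the only way an orbit leaves $R$ is by crossing a $W^s(Q)$-side, and on that side the $x$-coordinate is pushed past $\pm$(something $>1$), after which the one-dimensional estimate of step (ii) forces $|f^n x|\to\infty$ and non-return --- this gives property (a). Finally (v) combine: $\Omega\subset S$ and $\Omega$ lies between the two $W^u$-curves and between the two $W^s(Q)$-curves, so $\Omega\subset R$, giving (b).

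\medskip

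The main obstacle I expect is step (iii)--(iv): making precise that the four chosen invariant-manifold arcs actually close up into a topological disc $R$ with $\Omega$ trapped inside, and that a point leaving $R$ does so only through a vertical ($W^s(Q)$) side with $x$-coordinate already in the ``escaping'' regime. This requires controlling the global geometry of $W^u$ and $W^s(Q)$ near the tangency $\zeta_0$ --- where $W^u$ folds and is tangent to $W^s(Q)$ --- and checking that, despite this tangency, one can still cut off compact sub-arcs whose union bounds a disc containing $\Omega$ but whose boundary $W^s(Q)$-arcs sit at $|x|>1+c$. The strong dissipation $b\ll 1$ is what makes this feasible: the folding is confined to the $O(\sqrt b)$-thin critical region, everything else is nearly one-dimensional hyperbolic, and the escape estimate for $|x|$ slightly exceeding $1$ is robust. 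The remaining steps (i), (ii), (v) are essentially bookkeeping on top of the already-cited structural results and the expansion lemmas of the paper.
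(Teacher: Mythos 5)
Your overall construction is the same as the paper's: $R$ is the curvilinear rectangle bounded laterally by two nearly vertical arcs of $W^s(Q)$ (the local piece through $Q$ near $x=-1$ and a preimage of it near $x=+1$) and above/below by two curves in the image of a compact arc $\hat\gamma\subset W^u$ through the saddle. However, the decisive step in part (b) is where your argument has a genuine gap. You need to exclude non-wandering points from the two thin regions lying between the boundary of the horizontal strip and the two $W^u$-curves, and you propose to do this by invoking the inclination lemma together with ``$W^u$ is dense enough in the non-wandering set (transitivity on $\Omega$, cf.\ Lemma \ref{K})''. That is circular: Lemma \ref{K} is proved \emph{after} and \emph{by means of} Lemma \ref{nws}(b), and moreover it only holds for $a\in\Delta$, whereas Lemma \ref{nws} must hold for every $a\in[a^*-\varepsilon_0,a^*]$. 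The paper's proof avoids any appeal to transitivity: it shows (Sublemma \ref{accumulate}) that every point of the component $V_1$ of $\hat R\setminus R$ has some backward iterate outside the strip $V=\{|y|\le\sqrt b\}$ --- using the inclination lemma plus the positivity of the stable eigenvalue of the saddle to get one-sided accumulation of $f^n\gamma_0$ on $\hat\gamma$ --- and combines this with $V\cap f^{-1}V_2\subset V_1$ and the invariance fact $\Omega=f\Omega\subset f\mathbb R^2\subset V$, which forces $(\hat R\setminus R)\cap\Omega=\emptyset$. Some argument of this kind (backward-orbit exclusion, not density of $W^u$) is what your step (iii) is missing.

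A second, smaller flaw is your step (ii): it is not true that every point of the box $B$ outside the thin strip is eventually ejected to infinity; e.g.\ a point such as $(0,0.1)$ maps after one iterate into the strip near $(1,0)$ and its orbit stays bounded. The correct (and much simpler) reason that $\Omega$ lies in the thin strip is again invariance: since $f(x,y)=(1-ax^2,0)+b\Phi$ with $\Phi$ bounded, $f(\mathbb R^2)\subset\{|y|\le\sqrt b\}$, so $\Omega=f\Omega$ is contained there; the escape-to-$-\infty$ mechanism is only needed (as in the paper) for points of the strip lying outside $\hat R$, where the first coordinate decreases monotonically under iteration. Your step (iv) for property (a) --- orbits leave $R$ only by being mapped past the $W^s(Q)$ side near $x\approx-1$, after which the one-dimensional estimate forces divergence --- is consistent with the paper (which treats (a) only implicitly via the construction), though your assertion that the $W^s(Q)$ sides sit at $|x|>1+c$ is inaccurate; they sit at $x\approx\pm1$, and it is the \emph{image} of an escaping point that lands beyond them.
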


\begin{proof}
Let $V=\{(x,y)\in\mathbb R^2\colon |y|\leq\sqrt{b}\}$.
For $a\in\mathbb R$ close to $a^*$, let $A_0^-=A_0^-(a)$ denote the component of $V\cap W^s(Q)$ containing $Q$. 
Let $A_0^+$ denote the component of $V\cap f^{-1}A_0^-$ not intersecting $A_0^-$.
The $A_0^\pm$ are nearly vertical curves.
Let $\hat R$ denote the compact domain bordered by $\partial V$ and $A_0^\pm$.
Let $\hat\gamma$ denote the compact curve in $W^u$ containing the saddle in $W^u$
and having endpoints in $A_0^-$ and $A_0^+$.
 Define $R$ to be the compact domain bordered by $A_0^-$, $A_0^+$ and the two curves in $f\hat \gamma$
 intersecting both $A_0^-$ and $A_0^+$.

By definition, the set $\hat R\setminus R$ has two components. Let $V_1$ denote the component
whose boundary contains the fixed point in $W^u$. Let $V_2$ denote the other component.
\begin{sublemma}\label{accumulate}
If $x\in V_1$, then $f^{-n}x\notin V$ holds for some $n>0$.
\end{sublemma}

\begin{proof}
The $V_1$ is bordered by $A_0^\pm$, $\hat\gamma$, and a horizontal segment,
denoted by $\gamma_0$.
Define inductively $\gamma_n=\hat R\cap f\gamma_{n-1}$ for $n\geq1$.
The Inclination Lemma implies that $\gamma_n$ accumulates on $\hat \gamma$.
Since the stable eigenvalue of the saddle in $W^u$ is positive,
this accumulation takes place from one side. Hence the statement holds.
\end{proof}

Sublemma \ref{accumulate} and 
$V\cap f^{-1}V_2\subset V_1$ together imply that 
any point in $\hat R\setminus R$ is mapped by some backward iterates of $f$
to the outside of $V$. Since $\Omega=f\Omega\subset f\mathbb R^2\subset V$,
$(\hat R\setminus R)\cap\Omega=\emptyset$ holds.

Let $x\in V\setminus\hat R$. From the form of our map \eqref{henon},
there is an open neighborhood $U$ of $x$ such that
the first coordinate of any point in $f^nU$ goes to $-\infty$ as $n\to\infty$.
Hence
$U\cap f^nU\neq\emptyset$ holds for every $n\in\mathbb N\setminus\{0\}$,
and so $x\notin\Omega$.
Consequently we obtain $\Omega\subset R$.
\end{proof}

\begin{lemma}\label{K}
If $f\in\{f_a\colon a\in\Delta\}$, then $f$ is transitive on $\Omega$.
\end{lemma}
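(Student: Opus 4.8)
The plan is to exhibit a transverse homoclinic intersection inside $\Omega$, invoke the Birkhoff--Smale theorem to get a topologically transitive horseshoe $\Lambda\subset\Omega$, and then argue that $\Omega$ coincides with the closure of the homoclinic class, which by the structure provided in the introduction is all of $\Omega$. First I would recall from the setup (the second bulleted property of $f_{a^*}$, carried over to $a<a^*$) that for $a$ close to $a^*$ the stable and unstable manifolds of the fixed saddles $P,Q$ exhibit transverse homoclinic/heteroclinic intersections accumulating the orbit of the (former) tangency $\zeta_0$; in particular, for $a\in\Delta\subset[a^*-\varepsilon_0,a^*]$ there is a transverse homoclinic point $z$ associated to the saddle $Q$ (or to the heteroclinic cycle between $P$ and $Q$). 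By the Birkhoff--Smale theorem, the closure of the orbit of $z$ contains a hyperbolic basic set on which $f$ is topologically conjugate to a subshift of finite type; taking $z$ appropriately we get that $f$ restricted to this basic set $\Lambda$ is topologically transitive, and $\overline{W^u(Q)}\supset\Lambda$, $\overline{W^s(Q)}\supset\Lambda$.

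Next I would use Lemma \ref{nws} and the geometry of the domain $R$. The key point is that $R$ is bordered by arcs of $W^s(Q)$ and of $W^u$, and that every point of $R$ whose forward orbit leaves $R$ escapes to infinity and never returns (Lemma \ref{nws}(a)); hence $\Omega\subset R$ and moreover $\Omega$ is forward invariant inside $R$. Symmetrically, by considering $f^{-1}$ one controls backward orbits. I would then show that $W^u(Q)\cap R$ is dense in $R\cap\{$points with bounded backward orbit$\}$ and, crucially, that $\Omega$ is contained in $\overline{W^u(Q)}\cap\overline{W^s(Q)}$: any nonwandering point $x$ has both forward and backward orbit staying in the compact set $R$, and the expansion/contraction estimates (Lemma \ref{hyp} for the uniform expansion outside $I(\delta)$, together with the folding analysis giving stable/unstable behavior) force $x$ to be approximable by points of $W^u(Q)$ along one iterate direction and of $W^s(Q)$ along the other; equivalently, $\Omega$ is the homoclinic class $H(Q)=\overline{W^s(Q)\pitchfork W^u(Q)}$. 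Combined with the previous paragraph this gives $\Omega=H(Q)=\overline{\Lambda\text{-orbit}}$, which is topologically transitive because a homoclinic class of a saddle is always transitive (it is the closure of the orbit of a single transverse homoclinic point, and transverse homoclinic orbits of different homoclinic points of the same saddle are all heteroclinically related inside $H(Q)$).

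The main obstacle I anticipate is the identification $\Omega=H(Q)$, i.e.\ proving that there are no nonwandering points outside the homoclinic class of the fixed saddle. This is exactly where the ``internal tangency'' difficulty bites: one cannot simply quote a hyperbolic-theory argument, because $\Omega$ need not be hyperbolic. I would handle it by combining three ingredients already available in the paper: (i) Lemma \ref{simple}/Lemma \ref{hyp}, giving that every ergodic measure has a genuine negative and a genuine positive Lyapunov exponent with uniform bounds, so Pesin theory applies and every point in the support has local stable and unstable curves; (ii) the combinatorial control of returns to the critical region $I(\delta)$ from Proposition \ref{dich}, which prevents orbits from ``getting stuck'' near the fold; and (iii) the fact (from \cite{Tak11} and Lemma \ref{nws}) that the complement of $R$, and of the ``escaping'' part $\hat R\setminus R$, contains no nonwandering points, so $\Omega$ is squeezed into the region swept out by $W^u(Q)$. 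Putting these together, a shadowing/closing argument à la Katok shows every nonwandering point is accumulated by periodic points homoclinically related to $Q$, giving $\Omega\subset H(Q)$; the reverse inclusion $H(Q)\subset\Omega$ is automatic, and transitivity of $f|_\Omega$ follows.
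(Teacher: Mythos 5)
Your overall strategy (find a transverse homoclinic point, get a transitive horseshoe, and then show $\Omega$ equals the homoclinic class $H(Q)$) is different from the paper's, and you correctly identify that the whole difficulty sits in the inclusion $\Omega\subset H(Q)$. But the way you propose to close that gap does not work as stated. A ``shadowing/closing argument \`a la Katok'' applies to hyperbolic \emph{ergodic measures}: it produces periodic orbits approximating generic points (or the support) of such a measure. The nonwandering set can be strictly larger than the closure of the union of supports of invariant measures, so the fact that every ergodic measure is hyperbolic (Lemma \ref{simple}/Theorem A) does not let you approximate an arbitrary point of $\Omega$ by periodic points. Moreover, even for points where Pesin theory is available, the local stable and unstable manifolds have non-uniform sizes, so concluding that the produced periodic points are \emph{homoclinically related to $Q$} (transverse intersections of their invariant manifolds with $W^s(Q)$ and $W^u$) needs uniform estimates you do not supply. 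Finally, Pesin manifolds exist $\mu$-a.e., not at every point of a support, so step (i) of your plan already overstates what the theory gives.

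The deeper issue is that your argument never uses the defining property of the parameter set $\Delta$, namely ${\rm Leb}(K_a^+)=0$ (Theorem 1(b)), and transitivity is exactly where this property enters. The paper's proof runs as follows: for any open set $U$ meeting $K=\{x:\{f^nx\}_{n\in\mathbb Z}\ \text{bounded}\}$, almost every point of $U$ escapes $R$, and escape from $R$ happens only through the lenticular region $\Theta_\infty$ bordered by a parabolic arc of $W^s(Q)$ (since $f^{-1}(\mathbb R^2\setminus R)\cap R={\rm int}\,\Theta_\infty$); hence some iterate $f^nU$ crosses that arc and $U$ meets $W^s(Q)$, proving $W^s(Q)$ dense in $K$. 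Density of $W^u$ in $K$ then comes from strong dissipation: the area of $f^nR$ tends to zero while the $W^s(Q)$-portion of $\partial f^nR$ shrinks to $Q$, so points of $K$ (whose backward orbits stay in $R$) are forced close to $W^u$. Transitivity on $K=\Omega$ then follows from the Inclination Lemma and the transverse homoclinic intersections. Without invoking ${\rm Leb}(K_a^+)=0$ (or an equivalent input from the parameter selection), one should not expect a soft Pesin-theoretic argument to yield $\Omega=H(Q)$; this omission, together with the misapplication of Katok's closing lemma to nonwandering points, is a genuine gap in your proposal.
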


\begin{proof}  
We show both $W^s(Q)$ and $W^u$ are dense in the set $K:=\{x\in\mathbb R^2\colon \text{$\{f^nx\}_{n\in\mathbb Z}$ is bounded} \}$.
Since the stable and unstable manifolds of the 
fixed points of $f$ have mutual transverse intersections, from the Inclination Lemma it follows that
$f$ is transitive on $K$ and
$K\subset\Omega$. The reverse inclusion is a consequence of Lemma \ref{nws}(b).
As a corollary, $f$ is transitive on $\Omega$.

Recall that $K^+=\{x\in\mathbb R^2\colon \text{$\{f^nx\}_{n\in\mathbb N}$ is bounded} \}$ (See Theorem 1). 
Let $x\in K$, and $U$ be an open set containing $x$. 
Let $\Theta_\infty$ denote the compact lenticular domain bordered by the parabola in $W^s(Q)$ and one of the two boundary curves of
$R$ formed by $W^u$. Since the Lebesgue measure of 
$U\cap K^+$ is zero and $f^{-1}(\mathbb R^2\setminus R)\cap R={\rm int}\Theta_\infty$, there exists $n\geq0$ such that
$f^nU$ intersects the parabola in $W^s(Q)$.
Hence $U\cap W^s(Q)\neq\emptyset$. Since $x$ and $U$ are arbitrary, it follows that $W^s(Q)$ is dense in $K$.  

Consider $f^nR$ for $n\geq0$. Its boundary consists of segments of $W^s(Q)$ and $W^u$. 
The segments of $W^s(Q)$ become shorter and converge to $Q$ for increasing $n$.
Moreover, the area of $f^nR$ goes to zero as $n$ increases.
From the proof of Lemma \ref{nws}, if $x\in K$ then $f^{-n}x\in R$ holds for every $n\geq0$.
Consequently,
for any $x\in K$ and $n>0$ large, $x$ is near the boundary
of $f^nR$, and hence, near $W^u\cup W^s(Q)$. Since the part of the boundary formed by $W^s(Q)$
has decreasing length, $x$ is close to $W^u$. It follows that $W^u$ is dense in $K$.
\end{proof}

\subsection{Preliminaries for the proof of Proposition \ref{dich}}\label{initial}
To obtain the desired uniform lower bound on nonnegative Lyapunov exponents,
we estimate the growth of derivatives from below. 
One key observation is that nearly horizontal vectors grow exponentially fast in norm,
 as long as orbits stay out of a small critical region.

Set 
$\hat\lambda:=\frac{99}{100}\log 2$. For $\delta>0$ set $I(\delta):=\{(x,y)\in R\colon |x|<\delta\}.$
For a tangent vector $v=\left(\begin{smallmatrix}\xi\\ \eta\end{smallmatrix}\right)$ with $\xi\neq0$,
let $s(v):=|\eta/\xi|$. 
By a {\it $C^2(b)$-curve} we mean a compact, nearly horizontal $C^2$-curve in $\mathbb R^2$ such that the slopes of its tangent directions
are $\leq\sqrt{b}$ and the curvature is everywhere $\leq \sqrt{b}$.

\begin{lemma}\label{hyp}
For any $\delta>0$ 
 there exists an open set $\mathcal U\subset\mathbb R^2$ containing $(2,0)$ such that 
if $(a,b)\in\mathcal U$ then the following holds for $f=f_a$:
\begin{itemize}
\item[(a)]  if $n\geq1$ and $x\in  \mathbb R^2$ are such that $x,fx,\ldots,f^{n-1}x \in R\setminus I(\delta)$,
then for any nonzero vector $v$ at $x$ with $s(v)\leq\sqrt{b}$, 
$\|D_xf^nv\|\geq \delta e^{\hat\lambda n}\|v\|$ and $s(D_xf^nv)\leq\sqrt{b}$. If, in addition $f^nx\in I(\delta)$, then $\|D_xf^nv\|\geq e^{\hat\lambda n}\|v\|$;

\item[(b)] if $\gamma\subset R\setminus I(\delta)$ is a $C^2(b)$-curve, then so is $f\gamma$.

\end{itemize}

\end{lemma}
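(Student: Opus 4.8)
The plan is to prove both items by reducing everything to a one–dimensional expansion estimate for the quadratic–like first–coordinate dynamics, and then transporting it to $\mathbb{R}^2$ using the forward invariance of the horizontal cone $\{v\ne 0\colon s(v)\le\sqrt b\}$. First I would record the one–step facts. From \eqref{henon}, $D_{(x_1,x_2)}f_a$ equals $\mathrm{diag}(-2ax_1,0)$ plus $b$ times a matrix bounded uniformly on $R$; so if $|x_1|\ge\delta$ then, for $(a,b)$ in a neighbourhood $\mathcal U$ of $(2,0)$ small enough that $b\ll\delta^{2}$, a short computation shows that a vector $v$ with $s(v)\le\sqrt b$ is carried to a vector with $s(D_xf_av)\le\sqrt b$, and, writing $v=(\xi,\eta)$ and $D_xf_av=(\xi',\eta')$, that $|\xi'|=(2a|x_1|+\theta)|\xi|$ with $|\theta|\le Cb$. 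Under the hypotheses of (a) the cone condition then propagates along $x,fx,\dots,f^{n-1}x$, and $\|D_xf^nv\|$ is bounded below, up to the harmless factor $(1+b)^{1/2}$, by $\|v\|\prod_{j=0}^{n-1}(2a|x_1^{(j)}|+\theta_j)$, where $x_1^{(j)}$ is the first coordinate of $f^jx$, $|\theta_j|\le Cb$, and $x_1^{(j+1)}=1-a(x_1^{(j)})^2+O(b)$. Thus (a) reduces to bounding $\prod_{j<n}2a|x_1^{(j)}|$ from below, the $O(b)$–errors costing at most $(1-Cb/\delta)^n\ge e^{-\epsilon n}$, which I will absorb into the gap between $\hat\lambda$ and $\log2$.

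For the one–dimensional estimate the model is the Chebyshev map $g_2(x)=1-2x^2$, for which $h(x):=\sqrt{1-x^2}$ obeys the exact identity $|g_2'(x)|\,h(x)=2\,h(g_2(x))$, hence $|(g_2^n)'(x_0)|=2^n h(x_n)/h(x_0)$ along every orbit. I would establish, shrinking $\mathcal U$ if necessary, the perturbed one–step inequality $|g_a'(x)|\,h(x)/h\bigl(g_a(x)+O(b)\bigr)\ge e^{\hat\lambda}$ for all $|x|\ge\delta$ and $(a,b)\in\mathcal U$. At $(a,b)=(2,0)$ the left side is exactly $2>e^{\hat\lambda}$; the one delicate point is near $x=\pm1$, where $h$ vanishes, but here the key geometric observation is that an orbit of $g_a$ (or of its $O(b)$–perturbation) avoiding $(-\delta,\delta)$ never comes within $O(\delta^{2})$ of $\pm1$: to be within $t$ of $+1$ the preimage has modulus $\approx\sqrt{t/a}\ge\delta$, forcing $t\gtrsim\delta^2$, and closeness to $-1$ pushes this constraint backwards along the orbit (except when the whole backward orbit hugs the repelling fixed point $Q\approx(-1,0)$, where $h$ shrinks at a controlled geometric rate and the ratio stays near $2$). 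So if $\mathcal U$ is chosen with $|a-2|$ and $b$ much smaller than $\delta^{2}$, the one–step inequality holds along our orbits, and telescoping $\prod_{j<n}h(x_1^{(j)})/h(x_1^{(j+1)})=h(x_1^{(0)})/h(x_1^{(n)})$ gives $\prod_{j<n}2a|x_1^{(j)}|\ge e^{\hat\lambda n}\,h(x_1^{(n)})/h(x_1^{(0)})$. Since $h\le1$ and, by the same endpoint analysis, $h(x_1^{(n)})/h(x_1^{(0)})\ge\delta$, this is $\ge\delta e^{\hat\lambda n}$, whence $\|D_xf^nv\|\ge\delta e^{\hat\lambda n}\|v\|$; and if $f^nx\in I(\delta)$ then $|x_1^{(n)}|<\delta$, so $h(x_1^{(n)})$ is bounded below independently of $\delta$ (the last step landing at distance $\approx1/\sqrt a$ from the origin, where the derivative is $\approx2\sqrt a$, gives the extra room), and the factor $\delta$ is dropped, giving $\|D_xf^nv\|\ge e^{\hat\lambda n}\|v\|$.

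For (b): the tangent directions of $f\gamma$ have slope $\le\sqrt b$ by the cone estimate above with $n=1$, and $f\gamma$ is again a graph since $x_1$ is monotone along $\gamma$ (which lies in $\{x_1\ge\delta\}$ or in $\{x_1\le-\delta\}$). For the curvature, the curvature of $f\gamma$ at $fx$ is, up to normalisation, $\|(D_xf_a\gamma''+D^2_xf_a(\gamma',\gamma'))^{\perp}\|/\|D_xf_a\gamma'\|^2$, where $\perp$ is the component orthogonal to $D_xf_a\gamma'$; the denominator is $\ge(2a\delta-Cb)^2$, and in the numerator the only $O(1)$ contribution, the horizontal vector from $\partial_{x_1x_1}(1-ax_1^2)=-2a$, is almost exactly parallel to $D_xf_a\gamma'$ so its orthogonal part is $O(b)$, while $D_xf_a\gamma''=\kappa_\gamma D_xf_aN=O(b)$ because $\kappa_\gamma\le\sqrt b$. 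Hence the curvature of $f\gamma$ is $O(b/\delta^{3})\le\sqrt b$ once $b\ll\delta^{6}$; together with the earlier constraints this fixes $\mathcal U$.

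The hard part is the one–dimensional estimate of the second paragraph: a near–critical point of modulus $\approx\delta$ contributes only the small factor $2a\delta$ to the derivative, and this deficit must be recouped from the expansion accumulated while the orbit slowly escapes the neighbourhood of the repelling fixed point $Q\approx(-1,0)$ (derivative $\approx2a\approx4$ per step, escape time $\approx\log(1/\delta)/\log2$). The potential $h=\sqrt{1-x^2}$ is precisely the bookkeeping device that makes this cancellation transparent, and the margin $\hat\lambda=\frac{99}{100}\log2<\log2$ is what keeps the Chebyshev identity robust under both the perturbation away from $(a,b)=(2,0)$ and the $O(b)$–discrepancy between the genuine horizontal dynamics and $g_a$.
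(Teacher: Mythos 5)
Your argument is essentially the paper's own approach: the paper disposes of (a) by viewing $f$ as a singular perturbation of the Chebyshev quadratic and invoking its smooth conjugacy to the tent map, which is precisely the $h(x)=\sqrt{1-x^2}$ bookkeeping you make explicit, and it obtains (b) from (a) together with \cite[Lemma 2.4]{WanYou01}, which is the curvature computation you carry out by hand. So your proposal is correct in substance; it simply supplies (in the standard Benedicks--Carleson/Wang--Young fashion, modulo routine care with the $O(b)$ errors near the fixed point $Q$ where $h$ is small) the details that the paper delegates to citations.
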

\begin{proof}
Since $a^*\to2$ as $b\to0$, $f$ may be viewed as a singular perturbation of 
the Chebyshev quadratic $x\in\mathbb R\mapsto 1-2x^2$, which is smoothly conjugate to the tent map. 
From this (a) follows. 
 (b) follows from (a) and \cite[Lemma 2.4]{WanYou01}. 
\end{proof}

We handle returns to the inside of $I(\delta)$ with the help of {\it critical points}.
The parameters in $\Delta$ correspond to maps for which critical points are well-defined.
The critical points are constructed by an inductive step, and this construction has
to take into consideration possible escapes from $R$ under positive
iteration. Since the unbounded derivatives of \eqref{henon} at infinity 
is problematic, we modify $f$ outside of a fixed neighborhood of $R\cup fR$
so that derivatives are uniformly bounded on $\mathbb R^2$. Precise requirements
are the following (\cite[pp.41]{Tak11}).

\begin{itemize}

\item for any $x\in fR\setminus R$ and every $n\geq1$, $f^nx\notin R$;

\item for any $x\in fR\setminus R$, $v\in T_x\mathbb R^2\setminus\{0\}$
with $s(v)\leq\sqrt{b}$ 
and every $n\geq1$, $s(D_xf^nv)\leq\sqrt{b}$ and $\|D_xf^nv\|\geq 2^n\|v\|;$

\item there exists a constant $C_0>0$ independent of $b$ such that
$|\det Df|\leq C_0b$ and $\|\partial^if\|\leq C_0$ on $\mathbb R^2$ $(1\leq i\leq 4)$, 
where $\partial^i$ denotes any of the partial derivatives of order $i$
on $(a,x,y)$.

\end{itemize}

The following constants $$\lambda:=\frac{99}{100}\hat\lambda,\ \alpha,\ \delta,\ b$$
have been used in \cite{Tak11} for the construction of $\Delta$.
Some purposes of them are the following:

\begin{itemize}

\item  
$\lambda$ is concerned with rates
of growth of derivatives (see Proposition \ref{geo}II(a));

\item $\alpha\ll\lambda$ determines the speed of recurrence of critical points (see Proposition \ref{geo}II(b));


\item $\delta\ll1$ is the one in Lemma \ref{hyp} which determines the size of the critical region.

\end{itemize}

The $\alpha,\ \delta,\ b$ have been chosen in this order. In this paper we will shrink $\delta$ if necessary, at the expense of reducing $b$.
The letter $C$ will be used to denote generic positive constants independent
of $\alpha$, $\delta$, $b$.
Set $\kappa_0=C_0^{-10}$, where $C_0$ is the constant in Sect.\ref{initial}.

\subsection{Geometry of the unstable manifold and properties of critical orbits}\label{geometry}
We recall the properties of maps in $\{f_a\colon a\in\Delta\}$ as far as we need them.
Let $\mathcal C^{(0)}$ denote the closure of $I(\delta)$ and for $k\geq0$ write $f^kR=R_k$.
By an {\it  h-curve} we mean a compact, nearly horizontal $C^2$-curve in $\mathbb R^2$ such that the slopes of its tangent directions
are $\leq\sqrt{b}$ and the curvature is everywhere $\leq 1$.
We use the symbol $\approx$ to mean that both terms are equal up to a constant independent of $b$.

 \begin{prop}\label{geo} 
The following holds for $f\in\{f_a\colon a\in\Delta\}$:
\medskip


\noindent {\bf I (Geometry of the unstable manifold near the critical region)} There exist a nested sequence $\mathcal C^{(0)}\supset\mathcal
C^{(1)} \supset\mathcal C^{(2)}\supset\cdots$ and a countable set $\mathcal C$ in $W^u\cap I(\delta)$ such that the
following holds for $k=0,1,2,\ldots$:
\begin{itemize}

\item [(Ia)] $\mathcal C^{(k)}\subset R_k$, and $\mathcal C^{(k)}$ has a finite number of components
called $\mathcal Q^{(k)}$ each one of which is diffeomorphic to a
rectangle. The boundary of $\mathcal Q^{(k)}$ is made up of two
$C^2(b)$-curves of $\partial R_k$ connected by two vertical lines:
the horizontal boundaries are $\approx\min(2\delta,\kappa_0^{k})$
in length, and the Hausdorff distance between them is $\mathcal
O(b^{\frac{k}{2}})$;

\item [(Ib)] On each horizontal boundary $\gamma$ of each component $\mathcal
Q^{(k)}$ of $\mathcal C^{(k)}$, there is a unique element of $\mathcal C$ located
within $\mathcal O(b^{\frac{k}{4}})$ of the midpoint of $\gamma$;

\item [(Ic)] $\mathcal C^{(k)}$ is related to $\mathcal C^{(k-1)}$ as
follows: $\mathcal Q^{(k-1)}\cap R_{k}\neq\emptyset$, and has at most finitely many
components, each of which lies between two $C^2(b)$ subsegments of
$\partial R_{k}$ that stretch across $\mathcal Q^{(k-1)}$ as shown
in FIGURE 2. Each component of $\mathcal Q^{(k-1)}\cap R_{k}$
contains exactly one component of $\mathcal C^{(k)}$;

\item [(Id)] 
$\mathcal C=\bigcup_{k\geq0}\Xi^{(k)}$, where $\Xi^{(k)}$ denotes the collection
of elements of $\mathcal C$ on the horizontal boundaries of $\bigcup_{j=0}^k\mathcal C^{(j)}$.
Elements of $\mathcal C$ are called {\it critical points}.
\end{itemize}

\noindent {\bf II (Properties of critical points)} For each $\zeta\in\mathcal C$ the following holds:

\begin{itemize}

\item[(IIa)]
Let $\gamma$ be an $h$-curve in $I(\delta)$ which is tangent to $T_\zeta W^u$.
For $x\in\gamma\setminus\{\zeta\}$ let $t(\gamma;x)$ denote any unit vector tangent to $\gamma$ at $x$.
 There exists an integer $p(x)>0$ such that  
\begin{equation}\label{bind}\displaystyle{\|D_xf^{p(x)}t(\gamma;x)\|\geq 
e^{\frac{\lambda}{3}p(x)}}\ \text{ and }\ s
(D_xf^{p(x)}t(\gamma;x))\leq\sqrt{b};\end{equation}

\item[(IIb)]
$f^n\zeta\cap\mathcal C^{([\alpha n])}=\emptyset$ for every $n\geq1$;

\item[(IIc)] Let $k\geq0$ be an integer, and
let $\zeta_0$, $\zeta_1$ be critical points such that $\zeta_0\in\partial\mathcal Q^{(k)}$ and
$\zeta_1\in\partial\mathcal C^{(k+1)}\cap \mathcal Q^{(k)}$.
Then $|\zeta_0-\zeta_1|=\mathcal O(b^{\frac{k}{2}}) $.

\end{itemize}





\end{prop}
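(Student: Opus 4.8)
The plan is to recall the inductive construction of critical points from \cite{Tak11}: Proposition \ref{geo} is a compilation of the properties that construction produces along the parameters of $\Delta$, so most of the work is done there and the task is mainly to reassemble it. I would run the argument as an induction on the generation $k$. Set $\mathcal C^{(0)}=\overline{I(\delta)}$, a single rectangle $\mathcal Q^{(0)}$ whose horizontal boundaries are the portions over $|x|\le\delta$ of the two $C^2(b)$-curves of $\partial R$ lying in $W^u$ (Lemma \ref{nws}). Inductively, $\mathcal C^{(k)}\cap R_{k+1}$ has finitely many components: since $\partial R_{k+1}=f(\partial R_k)$ and $f$ keeps $C^2(b)$-curves $C^2(b)$ away from $I(\delta)$ (Lemma \ref{hyp}(b)) while folding them over $I(\delta)$ (the first coordinate of $f$ has its only critical point at $x=0$), each component of $\mathcal Q^{(k)}\cap R_{k+1}$ is pinched between two $C^2(b)$-subsegments of $\partial R_{k+1}$ that stretch across $\mathcal Q^{(k)}$; I then let the corresponding component of $\mathcal C^{(k+1)}$ be the rectangle they bound, completed by two vertical sides. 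This gives the rectangle structure of (Ia), statement (Ic), the nesting $\mathcal C^{(k+1)}\subset\mathcal C^{(k)}$, and a bijection between the components of $\mathcal Q^{(k)}\cap R_{k+1}$ and those of $\mathcal C^{(k+1)}$. On each horizontal boundary of each component of $\mathcal C^{(k)}$, the critical point is the (unique) point where $TW^u$ meets the most contracted direction, defined by the usual limiting procedure, which converges exactly for $a\in\Delta$ (this is why ``$\Delta$ corresponds to maps for which critical points are well-defined''); writing $\Xi^{(k)}$ for the critical points first appearing at generation $k$ one gets $\mathcal C=\bigcup_k\Xi^{(k)}$, which is (Id).

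Granting this skeleton, the metric assertions (Ia), (Ib) and (IIc) become geometric bookkeeping on $C^2(b)$-curves. Each inductive step alters the horizontal extent of a component by a bounded factor (at least $\kappa_0=C_0^{-10}$, since $f$ and $f^{-1}$ distort lengths only by bounded powers of $C_0$), which, tracked through the induction, yields the length $\approx\min(2\delta,\kappa_0^{k})$. The vertical contraction of rate $\mathcal O(\sqrt b)$ per iterate, applied over $k$ steps to the initial gap $\mathcal O(\sqrt b)$ --- equivalently, the quantitative form of the area-collapse of $f^nR$ noted in the proof of Lemma \ref{K} --- yields Hausdorff distance $\mathcal O(b^{k/2})$ between the two horizontal boundaries of a $\mathcal Q^{(k)}$. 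Finally, by a square-root-type estimate (as in \cite{MorVia93, Tak11}) comparing $TW^u$ with the contracting direction field across a vertical gap of size $\mathcal O(b^{k/2})$, the critical point lies within $\mathcal O(b^{k/4})$ of the midpoint of its horizontal boundary, which is (Ib); and since the horizontal boundaries of $\mathcal Q^{(k+1)}$ are $C^2(b)$-subsegments of $\partial R_{k+1}$ lying within $\mathcal O(b^{k/2})$ of those of $\mathcal Q^{(k)}$, the corresponding critical points $\zeta_0$ and $\zeta_1$ differ by $\mathcal O(b^{k/2})$, which is (IIc).

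The two dynamical assertions are where the real content lies. Property (IIb), $f^n\zeta\cap\mathcal C^{([\alpha n])}=\emptyset$, is the ``basic assumption'' bounding the recurrence of critical orbits: since $\mathcal C^{(j)}$ appears only by generation $j$, it amounts for each $n$ to finitely many conditions on $a$, and the heart of \cite{Tak11} --- its adaptation to the first-bifurcation geometry of the Benedicks--Carleson / Mora--Viana parameter-exclusion scheme \cite{BenCar91,MorVia93} --- is precisely that these can be imposed on a set $\Delta$ of full density at $a^*$. Property (IIa) is the bound-period (binding) estimate: for $x$ on an $h$-curve $\gamma$ tangent to $T_\zeta W^u$ at $\zeta$ and close to $\zeta$, the orbit of $x$ shadows that of $\zeta$; along the shadowing $t(\gamma;x)$ keeps slope $\le\sqrt b$ and, using (IIb) to make the shadowing last a time $p(x)\to\infty$ as $x\to\zeta$, together with Lemma \ref{hyp}(a) outside $I(\delta)$ and the one-dimensional-type derivative estimate inside $I(\delta)$, it accumulates a factor $\ge e^{\frac{\lambda}{3}p(x)}$. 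I expect the main obstacle to be exactly this package --- the full density of $\Delta$ at $a^*$, the binding lemma behind (IIa), and the exclusion estimates behind (IIb) --- which is far too long to reproduce here; accordingly the proof of Proposition \ref{geo} would consist of quoting those results of \cite{Tak11} and recording the routine geometric bookkeeping of (Ia)--(Id) and (IIc) indicated above.
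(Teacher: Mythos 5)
Your handling of items I, (IIa) and (IIb) coincides with the paper's: those are simply imported from \cite{Tak11} (Propositions 5.4, 5.2 and Corollary 5.13 there), and the paper attempts no more than that. The genuine gap is in item (IIc), which is the one assertion the paper actually proves and which you dismiss as routine bookkeeping. Knowing that the $C^2(b)$-subsegment $\gamma_1$ of $\partial R_{k+1}$ stretching across $\mathcal Q^{(k)}$ lies within Hausdorff distance $\mathcal O(b^{\frac{k}{2}})$ of the horizontal boundary $\gamma_0$ of $\mathcal Q^{(k)}$ does not by itself bound $|\zeta_0-\zeta_1|$: the critical points are defined separately on each curve (as limits of finite-order critical points, i.e.\ alignment of the tangent with the most contracted directions), and closeness of the two curves as sets gives no a priori control on the horizontal position of the alignment point on the new curve. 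Nor can you appeal to (Ib) at generation $k+1$ to locate $\zeta_1$, since the component $\mathcal Q^{(k+1)}$, and hence the midpoint of its horizontal boundary, is itself positioned around the new critical point --- that argument is circular.

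The paper closes this gap with a capture-plus-uniqueness argument that is absent from your proposal. It recalls the notion of a critical point of order $n$ on a $C^2(b)$-curve and the fact that on a given $C^2(b)$-curve such a point is unique; it approximates $\zeta_0$ and $\zeta_1$ by critical points $\zeta_0^{(k)}$, $\zeta_1^{(k)}$ of order $k$ on $\gamma_0$, $\gamma_1$ with errors $\mathcal O(b^{k})$; it then invokes \cite[Lemma 3.1]{Tak12b}, which ``captures'' a critical point $\zeta$ of order $k$ on $\gamma_1$ with $|\zeta_0^{(k)}-\zeta|=\mathcal O(b^{\frac{k}{2}})$; uniqueness of order-$k$ critical points on $C^2(b)$-curves forces $\zeta=\zeta_1^{(k)}$, and the triangle inequality yields $|\zeta_0-\zeta_1|=\mathcal O(b^{\frac{k}{2}})$. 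Without this step (or an equivalent quantitative comparison of $T W^u$ with the contracted direction field along the two nearby curves, exploiting the quadratic-type behaviour of the angle), the estimate claimed in (IIc) is not justified, and (IIc) is exactly what the proof of Proposition \ref{dich}(b-ii) later relies on.
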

For items I, (IIa), (IIb), see \cite[Proposition 5.4, Proposition 5.2, Corollary 5.13]{Tak11} respectively.
Item (IIa) asserts that the loss of the magnitude of derivatives 
due to the folding behavior occurring inside $I(\delta)$ is recovered in time $p(x)$, and the slope is restored to a horizontal one. Item (IIb) bounds the speed of recurrence of each critical point under forward iteration.

Let us see why Item (IIc) holds. Let $n\geq1$ be an integer and $\gamma$ a $C^2(b)$-curve in $W^u\cap I(\delta)$. 
A point $\zeta\in\gamma$ is called a critical point of order $n$ on $\gamma$ if:

\begin{itemize}
\item $\|D_{f\zeta}f^k\|\geq 1$ for every $1\leq k\leq n$;

\item for any one-dimensional subspace $V$ of $T_{f\zeta}\mathbb R^2$ other than $T_{f\zeta}W^u$,
$\|D_{f\zeta} f|{T_{f\zeta}W^u}\|>\|D_{f\zeta}f|V\|$.
\end{itemize}

For each $n\geq1$ and a $C^2(b)$-curve $\gamma$ in $W^u\cap I(\delta)$, there is at most one critical point of order $n$
on $\gamma$ (c.f. \cite[Remark 2.4, Sect.2.4, Sect.2.5]{Tak11}). We call this property \emph{the uniqueness of critical points on $C^2(b)$-curves}.

Let $\gamma_0$ denote the horizontal boundary of $\mathcal C^{(k)}$ containing $\zeta_0$.
Let $\gamma_1$ denote the $C^2(b)$-curve in $\partial R_{k+1}\cap\mathcal C^{(k)}$ containing 
containing $\zeta_1$.
By Proposition \ref{geo}(Ia), the lengths of $\gamma_0$, $\gamma_1$ are $\approx\min(2\delta,\kappa_0^k)$, and the Hausdorff distance between them is 
$\mathcal O(b^{\frac{k}{2}})$. 

Each critical point in $\mathcal C$ has been constructed as a limit of a sequence of critical points of order $n$ $(n=1,2,\ldots)$.
In particular, there exist a critical point $\zeta_\epsilon^{(k)}$ of order $k$ on $\gamma_\epsilon$,
such that $|\zeta_\epsilon-\zeta_\epsilon^{(k)}|=\mathcal O(b^k)$ $(\epsilon=0,1)$.
By \cite[Lemma 3.1]{Tak12b}, there exists a critical point $\zeta$ of order $k$ on $\gamma_1$ such that
$|\zeta_0^{(k)}-\zeta|=\mathcal O(b^{\frac{k}{2}})$. The uniqueness of critical points on $C^2(b)$-curves yields $\zeta=\zeta_1^{(k)}$.
Hence we obtain $|\zeta_0-\zeta_1|\leq
|\zeta_0-\zeta_0^{(k)}|+|\zeta_0^{(k)}-\zeta_1| =\mathcal O(b^{\frac{k}{2}}) $.

\begin{figure}
\begin{center}
\includegraphics[height=6cm,width=10cm]
{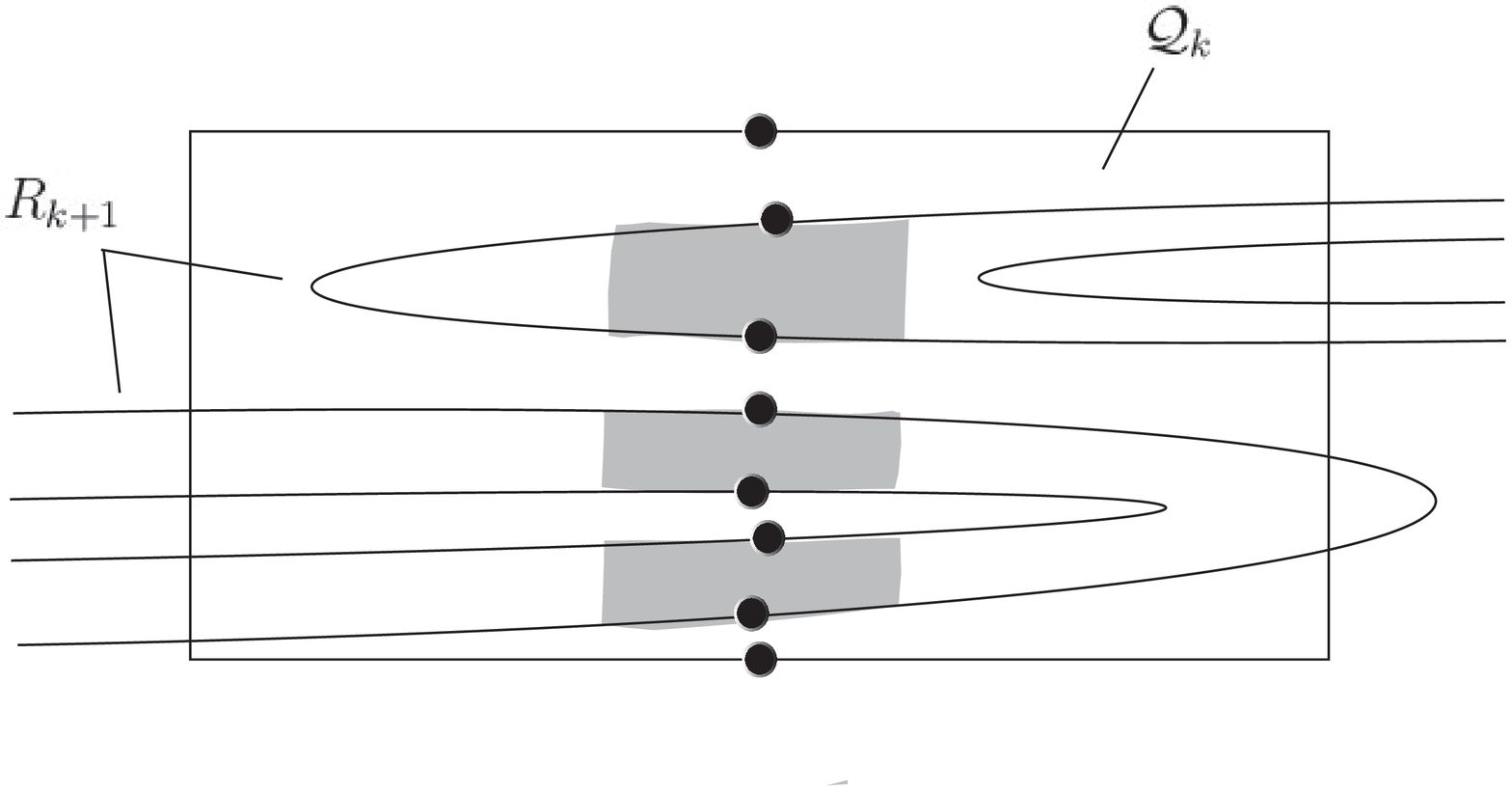}
\caption{The shaded regions denote components of $\mathcal C^{(k+1)}$ in $\mathcal Q^{(k)}$.The dots denote the critical points.}
\end{center}
\end{figure}

\subsection{Bound/free structure}\label{shrink}
In order to quantify the recurrence of critical pints to the set $\mathcal C$ we introduce a strictly decreasing sequence of sets as follows.
For $k\geq1$ let $\mathcal Q^{(k)}$ denote any component of $\mathcal C^{(k)}$.
Let $\zeta_0=(x_0,y_0)$ denote the critical point on the upper horizontal
boundary of $\mathcal Q^{(k)}$. 
Consider the two vertical lines $\{(x_0-r,y)\colon |y|\in\sqrt{b}\}$
and $\{(x_0+r,y)\colon |y|\in\sqrt{b}\}$.
Let $\mathcal Q^{(k)}(\delta^{\frac{k}{2}})$ denote the closed region 
bordered by these two lines and the horizontal boundaries of $\mathcal Q^{(k)}$.
Note that, by Proposition \ref{geo}, $\mathcal Q^{(k)}(\delta^{\frac{k}{2}})$ contains the two 
critical points on the horizontal boundaries of $\mathcal Q^{(k)}$.
Set $$\mathcal B^{(k)}(\delta^{\frac{k}{2}})=\bigcup\mathcal Q^{(k)}(\delta^{\frac{k}{2}}),$$ where the union runs over
all components of $\mathcal C^{(k)}.$
By Proposition \ref{geo}II(c),
$\mathcal B^{(k)}(\delta^{\frac{k}{2}})$ is strictly decreasing in $k$.

Let $x\in \Omega\cap I(\delta)$ and $\nu>1$ an integer.
We say $x$ is {\it controlled up to time $\nu$}  if 
\begin{equation}\label{contro}
f^n x\notin \mathcal B^{(n)}(\delta^{\frac{n}{2}})\ \ \text{for every }1\leq
n<\nu.\end{equation} 
If $x$ is controlled up to time $\nu$ and not so up to time $\nu+1$,
namely $f^\nu x\in \mathcal B^{(\nu)}(\delta^{\frac{\nu}{2}})$ holds, then we
say $x$ makes a {\it close return} at time $\nu$, and call $\nu$ a
close return time of $x.$
We say $x$ is {\it controlled} if it is controlled up to time $\nu$ for every $\nu>1$.

To a controlled point $x\in\Omega\cap I(\delta)$ we associate a sequence of integers 
\begin{equation}\label{integer}0<n_1<n_1+p_1\leq
n_2<n_2+p_2\leq n_3<\cdots\end{equation}
inductively as follows: 
$n_1$ is the smallest $n>0$ with $f^nx\in I(\delta)$.
By Lemma \ref{hyp}(b), $s(w_{n_1}(x))\leq\sqrt{b}$ holds.
Given $n_i>0$ $(i\geq1)$ with $f^{n_i}x\in I(\delta)$ and 
$s(w_{n_i}(x))\leq\sqrt{b}$, 
let $k_i$ denote the maximal $k\in[0,n_i]$ with $f^{n_i}x\in\mathcal C^{(k)}$.
Let $\mathcal Q^{(k_i)}$ denote the component of $\mathcal C^{(k_i)}$ containing $f^{n_i}x$, and
$\zeta_i$ the 
critical point on the upper horizontal boundary of $ \mathcal Q^{(k_i)}$.
Write $f^{n_i}x=(x_0,y_0)$ and $\zeta_i=(x_1,y_1)$.
By Proposition \ref{geo}(Ia)(Ib), we have $|y_0-y_1|\leq  2\sqrt{b}|x_0-x_1|+Cb^{\frac{k_i}{2}}$.
If $k_i<n_i$, then additionally using Proposition \ref{geo}(Ic) we have $|x_0-x_1|\geq 
C\kappa_0^{k_i+1},$ where $C>0$ is independent of $b$. Otherwise, i.e., if $k_i=n_i$,
the condition $f^{n_i} x\notin \mathcal Q^{(n_i)}(\delta^{\frac{n_i}{2}})$
gives $|x_0-x_1|\geq\delta^{\frac{n_i}{2}}.$
Hence, in either of the two cases
$|y_0-y_1|/|x_0-x_1|\leq 3\sqrt{b}$ holds.
This implies that there exists an $h$-curve $\gamma_i$ which is tangent to both
$T_{\zeta_i}W^u$ and $w_{n_i}(x)$. 
Let $p_i=p(f^{n_i}x,\zeta_i,\gamma_i)$ denote the integer determined by Proposition \ref{geo}(IIa).
Define $n_{i+1}$ to be the smallest $n\geq n_i+p_i$ with $f^nx\in I(\delta).$
This finishes the definition of the sequence in \eqref{integer}.

The sequence in \eqref{integer} decomposes the forward orbit of $x$ into alternative bound and free segments,
corresponding to time intervals $[n_i+1,n_i+p_i-1]$ and $[n_i+p_i,n_{i+1}]$, during which we refer
to the orbit of $x$ as being {\it bound} and {\it free} respectively.

\begin{lemma}\label{conduir} If $x\in\Omega\cap I(\delta)$ is controlled up to time $n$, 
and $f^nx$ is free, then 
$\Vert w_{n}(x)\Vert\geq \delta e^{\frac{\lambda}{3}(n-1)}.$
If, in addition $f^nx\in I(\delta)$, then the constant $\delta$ can be dropped.
\end{lemma}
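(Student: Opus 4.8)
The plan is to read off the lower bound on $\|w_n(x)\|$ directly from the bound/free decomposition of the forward orbit of $x$ supplied by the sequence \eqref{integer}, carrying along the slope control $s(w_k(x))\leq\sqrt b$ at the times $k$ that matter so that the two relevant estimates keep applying. These estimates are Lemma \ref{hyp}(a), which gives uniform expansion of nearly horizontal vectors while the orbit avoids $I(\delta)$, and Proposition \ref{geo}(IIa), which recovers the derivative lost to folding during each bound period; the glue is the inequality $\hat\lambda>\lambda/3$, which holds since $\lambda=\tfrac{99}{100}\hat\lambda$, together with the fact that the lengths of successive stretches telescope. Since $x\in\Omega$, the set $\Omega$ is $f$-invariant, and $\Omega\subset R$ by Lemma \ref{nws}(b), every $f^jx$ lies in $R$; the only thing that varies along the orbit is whether $f^jx\in I(\delta)$, which is precisely what the return times $n_i$ record.

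First I would dispose of the initial stretch $1\leq n\leq n_1$. There $fx,\dots,f^{n-1}x\in R\setminus I(\delta)$, so Lemma \ref{hyp}(a) applied to the horizontal vector $w_1(x)$ at $fx$ gives $\|w_n(x)\|\geq\delta e^{\hat\lambda(n-1)}\geq\delta e^{\frac{\lambda}{3}(n-1)}$ and $s(w_n(x))\leq\sqrt b$; if in addition $f^nx\in I(\delta)$, in particular if $n=n_1$, the stronger clause of Lemma \ref{hyp}(a) replaces this by $\|w_n(x)\|\geq e^{\hat\lambda(n-1)}$, so the factor $\delta$ is gone. In particular $\|w_{n_1}(x)\|\geq e^{\frac{\lambda}{3}(n_1-1)}$ with $s(w_{n_1}(x))\leq\sqrt b$, which starts an induction.

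The inductive step runs across one bound period followed by one free stretch, from a return time $n_i$ to the next one $n_{i+1}$. Assume $\|w_{n_i}(x)\|\geq e^{\frac{\lambda}{3}(n_i-1)}$ and $s(w_{n_i}(x))\leq\sqrt b$. By the construction preceding \eqref{integer}, the $h$-curve $\gamma_i$ passes through $f^{n_i}x$ tangent to $w_{n_i}(x)$ and through the critical point $\zeta_i\neq f^{n_i}x$ tangent to $T_{\zeta_i}W^u$, so Proposition \ref{geo}(IIa), applied to the unit vector $t(\gamma_i;f^{n_i}x)$ and rescaled by $\|w_{n_i}(x)\|$, gives $\|w_{n_i+p_i}(x)\|\geq e^{\frac{\lambda}{3}p_i}\|w_{n_i}(x)\|\geq e^{\frac{\lambda}{3}(n_i+p_i-1)}$ with $s(w_{n_i+p_i}(x))\leq\sqrt b$. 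Since $f^{n_i+p_i}x,\dots,f^{n_{i+1}-1}x\in R\setminus I(\delta)$ and $f^{n_{i+1}}x\in I(\delta)$, the stronger clause of Lemma \ref{hyp}(a) upgrades this to
\[
\|w_{n_{i+1}}(x)\|\ \geq\ e^{\hat\lambda(n_{i+1}-n_i-p_i)}\,e^{\frac{\lambda}{3}(n_i+p_i-1)}\ \geq\ e^{\frac{\lambda}{3}(n_{i+1}-1)},
\]
using $\hat\lambda\geq\lambda/3$ and $p_i+(n_{i+1}-n_i-p_i)+(n_i-1)=n_{i+1}-1$; this closes the induction. For a general free time $n_i+p_i\leq n\leq n_{i+1}$ with $i\geq1$, one runs the same two estimates from $n_i$ up to $n$; the sole difference is that the terminal free stretch $[n_i+p_i,n]$ need not end inside $I(\delta)$, so if $n<n_{i+1}$ one keeps a factor $\delta$ and obtains $\|w_n(x)\|\geq\delta e^{\frac{\lambda}{3}(n-1)}$, whereas if $f^nx\in I(\delta)$ the stronger clause of Lemma \ref{hyp}(a) removes it. The degenerate subcases ($n=n_i+p_i$, or $n_1=1$) are immediate.

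The one step that needs genuine care --- and the only place the argument could break --- is the identification of the vector to which Proposition \ref{geo}(IIa) is applied. One must use that $\gamma_i$ was chosen, in the construction preceding \eqref{integer}, precisely so as to be an $h$-curve tangent to $w_{n_i}(x)$ at $f^{n_i}x$ --- legitimate there because the chord from $f^{n_i}x$ to $\zeta_i$ was shown to have slope at most $3\sqrt b$ --- and tangent to $T_{\zeta_i}W^u$ at $\zeta_i$, with $p_i=p(f^{n_i}x,\zeta_i,\gamma_i)$ the integer furnished by (IIa). Then (IIa) applies to $t(\gamma_i;f^{n_i}x)$ verbatim, and linearity of $D_{f^{n_i}x}f^{p_i}$ transfers both the growth bound and the slope control from $t(\gamma_i;f^{n_i}x)$ to its scalar multiple $w_{n_i}(x)$. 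Everything else is a routine chaining of the two exponential estimates.
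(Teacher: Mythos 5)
Your proposal is correct and follows essentially the same route as the paper's proof: it chains the free-period expansion of Lemma \ref{hyp}(a) (with its stronger clause at returns to $I(\delta)$ to drop the factor $\delta$) with the bound-period recovery estimate of Proposition \ref{geo}(IIa) along the decomposition \eqref{integer}, exactly the two estimates \eqref{estimates} the paper combines via the Chain Rule. The extra care you take in identifying $t(\gamma_i;f^{n_i}x)$ with the direction of $w_{n_i}(x)$ is a correct filling-in of a detail the paper leaves implicit.
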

\begin{proof}
Let $0<n_1<n_1+p_1\leq n_2<\cdots$ be the sequence of integers defined as above.
The following derivative estimates follow from Lemma \ref{hyp} and Proposition \ref{geo}(IIa): 
\begin{equation}\label{estimates}
\|w_{n_i+p_i}(x)\|\geq e^{\frac{\lambda}{3}p_i}\|w_{n_i}(x)\|\ \ \text{and}\ \
\|w_{n_{i+1}}(x)\|  \geq e^{\hat \lambda(n_{i+1}-n_i-p_i)}\|w_{n_i+p_i}(x)\|.\end{equation}
The first estimate of Lemma \ref{conduir} follows from \eqref{estimates} and the Chain Rule.
For the last one we additionally use the last statement of Lemma \ref{hyp}(a).
\end{proof}

\subsection{Proof of Proposition \ref{dich}.}\label{22}
We are in position to finish the proof of Proposition \ref{dich}.

\begin{lemma}\label{segment}
Let $\nu\geq1$, and let $\ell$ be a compact $C^1$ curve in $R$ with 
${\rm length}(\ell)\leq 2\delta^{\frac{\nu}{2}}$.
If $1\leq n<2\nu$ and $ I(\delta)\cap f^n\ell\neq\emptyset$,
then 
$f^n\ell\subset R$ and ${\rm length}(f^n\ell)\leq \delta^{\frac{\nu}{3}}$. 
\end{lemma}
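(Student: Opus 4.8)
\textbf{Proof proposal for Lemma \ref{segment}.}

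The plan is to argue by induction on $n$, tracking both the containment in $R$ and the length bound through each iterate, splitting according to whether the current image meets the critical region $I(\delta)$. First I would set up the induction: for each $m$ with $1 \le m \le n$ I want to show that $f^m\ell \subset R$ whenever $f^m\ell$ is ``reachable'' (i.e. no earlier image has escaped), and to control $\mathrm{length}(f^m\ell)$. The mechanism for staying inside $R$ is Lemma \ref{nws}(a): once a point of $\ell$ leaves $R$ at some step it never returns, so if $f^n\ell$ meets $I(\delta) \subset R$ at all, the whole curve $f^j\ell$ must have stayed inside $R$ for all $j \le n$ — here I would use that $\ell$ is connected, so if part of $f^j\ell$ were outside $R$ and part inside, by Lemma \ref{nws}(a) the outside part could never come back, contradicting $f^n\ell \cap I(\delta) \ne \emptyset$. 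That disposes of the containment claim and also licenses applying the expansion/distortion estimates of Lemma \ref{hyp} and the bound/free machinery of Sect.\ref{shrink} along the orbit of $\ell$.

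For the length bound, the key dichotomy at each step is: either $f^j\ell \subset R \setminus I(\delta)$, in which case $f$ acts on it essentially hyperbolically and the length can grow by at most a fixed factor (controlled via $\|\partial f\| \le C_0$, or more precisely via Lemma \ref{hyp}(b) for the shape), or $f^j\ell$ meets $I(\delta)$, where the quadratic folding can at worst square-root-compress transverse directions but still only expands lengths by a bounded amount since $\|\partial f\| \le C_0$ on all of $\mathbb{R}^2$ after the modification. So over $n < 2\nu$ steps the length is multiplied by at most $C_0^{n} \le C_0^{2\nu}$. Starting from $\mathrm{length}(\ell) \le 2\delta^{\nu/2}$, this gives $\mathrm{length}(f^n\ell) \le 2 C_0^{2\nu}\delta^{\nu/2}$, and the point is that $\delta$ is chosen small enough (depending on $C_0$, after $\alpha$ is fixed, as in Sect.\ref{initial}) that $2 C_0^{2\nu}\delta^{\nu/2} \le \delta^{\nu/3}$ for all $\nu \ge 1$ — this is a clean exponential comparison $2(C_0^{2}\delta^{1/6})^{\nu} \le 1$, valid once $\delta < C_0^{-12}$ and $\nu$ large, with the finitely many small $\nu$ absorbed by shrinking $\delta$ further. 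The gap between the exponents $\nu/2$ and $\nu/3$ is precisely the slack that soaks up the geometric growth factor $C_0^{2\nu}$.

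The main obstacle I anticipate is making the crude bound ``length multiplies by $\le C_0$ per step'' genuinely rigorous across passages through $I(\delta)$, where the map is far from a similarity: a short curve crossing the critical region can have its image bent, but since we only care about arc length and $f$ is globally $C_0$-Lipschitz on the relevant domain, $\mathrm{length}(f(f^j\ell)) \le C_0 \,\mathrm{length}(f^j\ell)$ holds with no subtlety — the apparent danger (folding) hurts transversality, not length. A secondary point to be careful about is the base case and the ``$I(\delta) \cap f^n\ell \ne \emptyset$'' hypothesis being used correctly to propagate containment backward through all intermediate times; I would phrase the induction so that the hypothesis is invoked once at the end rather than at each step. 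I expect the whole argument to be short: the containment part is Lemma \ref{nws}(a) plus connectedness, and the length part is a one-line Lipschitz estimate compared against the exponent slack built into the statement.
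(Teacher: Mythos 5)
Your length estimate is essentially the paper's: iterate a uniform Lipschitz bound for $f$ on the orbit of the curve ($\|Df\|\leq 5$ on $R$ in the paper, your $C_0$), compare $5^{2\nu}\cdot 2\delta^{\nu/2}$ (resp. $2C_0^{2\nu}\delta^{\nu/2}$) against $\delta^{\nu/3}$, and absorb the geometric growth into the exponent gap by taking $\delta$ small. That part is fine.

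The containment claim $f^n\ell\subset R$ is where your argument breaks down, and it is the real content of the lemma. You argue: if part of $f^j\ell$ lay outside $R$, that part could never return, ``contradicting $f^n\ell\cap I(\delta)\neq\emptyset$.'' There is no contradiction: the hypothesis only provides \emph{one} point of $f^n\ell$ in $I(\delta)$, and it is perfectly consistent with Lemma \ref{nws}(a) that the rest of the curve has escaped $R$ for good while that single point returns to the critical region --- which is exactly the scenario the lemma must rule out. Connectedness alone cannot do this; the exclusion is quantitative. The paper's mechanism: let $x\in\ell$ with $f^nx\in I(\delta)$, and let $A_j$ be the component of $\ell\cap f^{-j}R$ containing $x$ (a decreasing family, by Lemma \ref{nws}(a)). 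First one bounds ${\rm length}(f^nA_n)\leq 5^{2\nu}\cdot 2\delta^{\nu/2}<1/2$. If some part of $\ell$ escaped before time $n$, i.e. $A_{k-1}\neq A_n$ for the minimal $k$ with $A_k=A_n$, then since $f^{-1}(\mathbb R^2\setminus R)\cap R={\rm int}\,\Theta_\infty$, the image of $A_k$ at the escape time must touch the parabolic arc $\Theta_\infty\cap W^s(Q)$ bounding the escape region; all forward iterates of that arc stay in the two stable boundary curves of $R$, which lie at a definite distance (more than $1/2$) from $I(\delta)$. Hence $f^nA_n$ would contain both $f^nx\in I(\delta)$ and a point of $\partial^s R$, forcing ${\rm length}(f^nA_n)>1/2$, contradicting the length bound. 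So containment is itself a consequence of the smallness of $\ell$ together with the specific geometry of the exit set $\Theta_\infty$ and of $W^s(Q)$; this step is missing from your proposal and cannot be replaced by the connectedness remark.
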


\begin{proof}
Take a point $x\in f^{-n}(I(\delta)\cap f^n\ell)$.
For $j\in[0,n]$ let $A_j$ denote the connected component of $\ell\cap f^{-j}R$ containing $x$.
Since points which escape out of $R$ do not return to $R$ under forward iteration as in Lemma \ref{nws}(a), $\{A_j\}_j$ is decreasing in $j$.
Lemma \ref{nws}(a) also implies
$\bigcup_{i=0}^nf^iA_n\subset R$. From this and $\|Df\|\leq 5$ on $R$ we have 
\begin{equation}\label{length}{\rm length}(f^nA_n)\leq 5^n\cdot{\rm length}(A_n)\leq 5^{n}\cdot{\rm length}(\ell)
\leq 5^{2\nu}\cdot 2\delta^{\frac{\nu}{2}}<1/2.\end{equation}

We claim $f^n\ell\subset R$. This clearly holds for the case $\ell=A_n$. Consider the case $\ell\neq A_n$.
Let $k\in[0,n]$ denote the minimal such that
$A_k=A_n$. Since $A_0=\ell$, the case $k=0$ is already covered.
 Suppose $k>0$. Then we have $A_{k-1}\neq A_n$. 
Recall that  $\Theta_\infty$ denotes the compact lenticular domain bordered by the parabola in $W^s(Q)$ and one of the two boundary curves of $R$
formed by $W^u$. Since $f^{-1}(\mathbb R^2\setminus R)\cap R={\rm int}\Theta_\infty$, 
$f^kA_{k-1}$ intersects $\Theta_\infty$.
Since $x\in A_{k-1}$ and $f^kx\in R\setminus{\rm int}\Theta_\infty$,
$f^kA_k$ intersects $\Theta_\infty\cap W^s(Q)$.
Since $f^nx\in f^nA_k$ and $f^nx\in I(\delta)$, and since
any forward iterates of $\Theta_\infty\cap W^s(Q)$ are contained in
the two curves in $W^s(Q)$ forming the boundary of $R$,
we have ${\rm length}(f^nA_k)={\rm length}(f^nA_n)>1/2$. This yields a contradiction to \eqref{length}, and the claim holds.

From the above claim and Lemma \ref{nws}(a),
$\bigcup_{i=0}^{n-1}f^i\ell\subset R$ holds. This yields 
\begin{equation*}{\rm length}(f^n\ell)\leq 5^n\cdot{\rm length}(\ell)
\leq 5^{2\nu}\cdot 2\delta^{\frac{\nu}{2}}<\delta^{\frac{\nu}{3}}.\qedhere\end{equation*}
\end{proof}
\medskip

\noindent{\it Proof of Proposition \ref{dich}.}
Let $x\in\Omega$. 
Define nonnegative integers
$\nu_0,\nu_1,\ldots$ inductively as follows: 
$\nu_0$ is the smallest $n\geq0$ with $f^nx\in I(\delta)$.
Given $\nu_0,\ldots,\nu_{l},$ define
$\nu_{l+1}$ to be the close return time of
$f^{\nu_0+\nu_1+\cdots+\nu_{l}}x\in I(\delta)$. 
If  $\nu_0,\ldots,\nu_{l}$ are defined and 
$f^{\nu_0+\nu_1+\cdots+\nu_{l}}x\in I(\delta)$ is controlled, and thus $\nu_{l+1}$ is undefined,
then set $\bar\nu=\nu_0+\nu_1+\cdots+\nu_{l}$.

If $\bar\nu$ is defined, then $f^{\bar\nu}x$ is controlled. By Lemma \ref{conduir},
for infinitely many $n\geq1$,
$$\|w_n(f^{\bar \nu}x)\|
\geq \delta e^{\frac{\lambda}{3} (n-1)}\geq e^{\frac{\log2}{4}(n-1)}.$$
Hence Proposition \ref{dich}(a) holds.

If $\bar\nu$ is undefined, then we end up with the infinite sequence
$\{\nu_l\}_{l=0}^\infty$ of nonnegative integers. By definition, each $\nu_{l+1}$ is a close return time of 
$f^{\nu_0+\nu_1+\cdots+\nu_{l}}x$. By Lemma \ref{conduir}, 
$$\|w_{\nu_{l+1}}(f^{\nu_0+\nu_1+\cdots+\nu_{l}}x)\|
\geq e^{\frac{\lambda}{3}(\nu_{l+1}-1)}\geq e^{\frac{\log2}{4}(\nu_{l+1}-1)}.$$
Hence Proposition \ref{dich}(b-i) holds.

We have $\nu_1>0$. To prove Proposition \ref{dich}(b-ii) 
it is left to show $\nu_{l+1}\geq 2\nu_{l}$.
Set $y=f^{\nu_{1}+\cdots+\nu_{l-1}+\nu_l}x$.
We show that if $1\leq n\leq 2\nu_l$, then $n$ is not a close return time of $y$.
We derive a contradiction assuming $1\leq n\leq 2\nu_l$ and $n$ is a close return time of $y$.

Since $\nu_l$ is a close return time of $f^{\nu_{1}+\cdots+\nu_{l-1}}x$,  
$y\in \mathcal B^{(\nu_l)}(\delta^{\frac{\nu_l}{2}})$.
Let $\zeta$ denote the critical point on the upper horizontal boundary of the component of 
$\mathcal B^{(\nu_l)}(\delta^{\frac{\nu_l}{2}})$ 
containing $y$.
Take a compact $C^1$ curve $\ell$ in $R$ connecting $y$ and $\zeta$,
with ${\rm length}(\ell_0)\leq 2\delta^{\frac{\nu_l}{2}}$. Since $f^ny\in I(\delta)\cap f^n\ell$,
by Lemma \ref{segment},
\begin{equation}\label{distan}
|f^ny-f^n\zeta|\leq{\rm length}(f^n\ell)\leq\delta^{\frac{\nu_l}{3}}.\end{equation}

Since $n$ is a close return time of $y$, $f^ny\in\mathcal B^{(n)}(\delta^{\frac{n}{2}})$.
Let $\zeta'$ denote the critical point on the upper horizontal boundary of the component of 
$\mathcal B^{(n)}(\delta^{\frac{n}{2}})$ containing $f^ny$. Then $|\zeta'-f^ny|\leq 2\delta^{\frac{n}{2}}$ holds.
Choose a sequence $\zeta^{(n)},\zeta^{(n-1)},\ldots,\zeta^{([\alpha n])}$ of critical points such that
$\zeta^{(n)}=\zeta'$, and the following holds: $\zeta^{(i)}\in\partial\mathcal C^{(i)}$ for every $i\in\{n,n-1,\ldots,[\alpha n]\}$,
and if $\mathcal Q^{(i)}$ denotes the component of $\mathcal C^{(i)}$ whose horizontal boundary contains 
$\zeta^{(i)}$, then $\mathcal Q^{(n)}\subset\mathcal Q^{(n-1)}\subset\cdots\subset\mathcal Q^{([\alpha n])}$.
Proposition \ref{geo}II(b) gives $f^n\zeta\notin\mathcal C^{[\alpha n]}$.
Hence $|f^n\zeta-\zeta^{[\alpha n]}|\geq\kappa_0^{\alpha n}$.
Proposition \ref{geo}II(c) yields
$$|\zeta^{([\alpha n])}-\zeta'|\leq\sum_{i=[\alpha n]}^{n-1}
|\zeta^{(i)}-\zeta^{(i+1)}|\leq C\sum_{i=[\alpha n]}^nb^{\frac{i}{2}}\leq Cb^{\frac{\alpha n}{3}}.$$
Putting these estimates together, and then using the fact that $\kappa_0$ is independent of $\delta$ and $b$ we obtain
\begin{align*}
|f^ny-f^n\zeta|&\geq |f^n\zeta-\zeta^{([\alpha n])}|-|\zeta^{([\alpha n])}-\zeta'|-|\zeta'-f^ny|\\
&\geq \kappa_0^{\alpha n}-Cb^{\frac{\alpha n}{3}}-2\delta^{\frac{n}{2}}\geq\kappa_0^{2\alpha n}\geq\kappa_0^{4\alpha \nu_l},\end{align*}
which is a contradiction to \eqref{distan}.
This completes the proof of Proposition \ref{dich}(b-ii).
\qed
\medskip

\section{Uniqueness of tangencies at the first bifurcation parameter}
In this last section we show the uniqueness of tangencies at the first bifurcation parameter $a^*$.
Recall that a point $x\in\mathbb R^2$ is called {\it homoclinic} 
if there exists a saddle $p$ such that $x\neq p$ and $x\in W^s(p)\cap W^u(p)$. 
It is called {\it heteroclinic} if there exist
two distinct saddles $p$, $q$ such that $x\in W^s(p)\cap W^u(q)$.
A homoclinic or heteroclinic point $x$ is called {\it transverse} if the two invariant manifolds
through $x$ intersect each other transversely.  
Recall that $\zeta_0$ is the point of tangency near $(0,0)$ between the stable and unstable manifolds of the fixed points of $f_{a^*}$ (see FIGURE 1).
\begin{theoremb}
Any homoclinic or heteroclinic point of $f_{a^*}$ other than $f_{a^*}^n\zeta_0$ $(n\in\mathbb Z)$ is transverse.
\end{theoremb}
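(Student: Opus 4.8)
The plan is to localize any non-transverse intersection inside the critical region, recognize it there as a critical point lying on a stable manifold, and then show that the orbit of $\zeta_0$ is the only possibility.

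First I would record that the full orbit of any homoclinic or heteroclinic point $x$ stays in $R$: since $f^n x$ tends to a saddle of $\Omega$ as $n\to+\infty$ and as $n\to-\infty$ the orbit is bounded, and Lemma~\ref{nws}(a), applied to $f$ and to $f^{-1}$, then forces $f^nx\in R$ for all $n\in\mathbb Z$. Moreover the two limit saddles lie outside $I(\delta)$, so this orbit meets $I(\delta)$ only finitely often. Next, outside $I(\delta)$ there is no tangency: by Lemma~\ref{hyp}(b) the unstable manifold, wherever the relevant arc has stayed in $R\setminus I(\delta)$, is a $C^2(b)$-curve and so has tangent slope $\le\sqrt b$, whereas from the strongly dissipative form \eqref{henon} of $f$ there is, dually to Lemma~\ref{hyp}(a), a thin backward-invariant cone about the vertical on $R\setminus I(\delta)$ to which every stable manifold is tangent, so that its slope exceeds $\sqrt b$; hence $W^s\pitchfork W^u$ at every point of $R\setminus I(\delta)$. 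Since being a tangency of the two invariant manifolds is an $f$-invariant property of a point, it follows that a non-transverse $x$ must satisfy $f^mx\in I(\delta)$ for some $m$, with $\zeta:=f^mx$ again a non-transverse intersection of $W^u$ with a stable manifold.

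It then remains to prove $\zeta\in\{f^n_{a^*}\zeta_0:n\in\mathbb Z\}$. The arc of $W^u$ through $\zeta$ comes, under iteration, from a $C^2(b)$-curve that has left $I(\delta)$, so the defining conditions of a critical point, together with the uniqueness of critical points on $C^2(b)$-curves and the construction of $\mathcal C$ in \cite{Tak11}, identify $\zeta$ with an element of $\mathcal C$; and using Proposition~\ref{geo}(Ib) one checks that $\zeta_0$ is exactly the critical point on a horizontal boundary of $\mathcal C^{(0)}$ --- the one to which the parabola in $W^s(Q)$ bounding $\Theta_\infty$ is tangent. So everything reduces to showing that $\zeta_0$ is the \emph{only} point of $\mathcal C$ lying on $\bigcup_p W^s(p)$. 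For this I would follow the folds of the curves $f^k(\partial R\cap W^u)\subset W^u$ that carry the components of $\mathcal C^{(k)}$, controlling their position by the nested structure and Hausdorff-distance bounds of Proposition~\ref{geo}I and the recurrence bound \ref{geo}(IIb), and then deduce that a second non-transverse intersection at $a=a^*$ is incompatible with the ``almost hyperbolic'' structure of $\Omega_{a^*}$ at the first bifurcation --- for instance because it would unfold into a non-transverse intersection inside $\Omega_a$ for a one-sided neighborhood of $a^*$, contradicting the uniform hyperbolicity of $\Omega_a$ for $a>a^*$ established in \cite{BedSmi06,CLR08}.

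The hard part will be exactly this last step. The pointwise hyperbolic estimates of Section~3 do not separate $\zeta_0$ from a deeper critical point, so one has to convert the minimality of $a^*$ into a genuine obstruction and, in doing so, pin down the position of the relevant fold of $W^u$ relative to the stable lamination near the critical region. That parameter-geometric analysis --- not any of the steps above --- is where the real work lies.
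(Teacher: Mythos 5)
Your argument stops exactly where the theorem actually lives. The reduction you sketch (tangencies must lie in $I(\delta)$, and there they must be critical points of $W^u$ on a stable manifold) still leaves the whole content of Theorem B unproved, namely that the orbit of $\zeta_0$ is the \emph{only} such configuration, and your proposed way of closing this --- "a second non-transverse intersection would unfold into a non-transverse intersection inside $\Omega_a$ for a one-sided neighborhood of $a^*$, contradicting hyperbolicity for $a>a^*$" --- does not work as stated. A tangency present at the single parameter $a^*$ need not persist for nearby $a$; indeed the tangency at $\zeta_0$ itself coexists with the uniform hyperbolicity of $\Omega_a$ for all $a>a^*$, so the mere existence of one more tangency at $a=a^*$ yields no contradiction with hyperbolicity on the hyperbolic side. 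To make that line work you would have to show that a hypothetical second tangency either persists or forces non-hyperbolicity for parameters $a>a^*$, and no mechanism for this is given; you acknowledge this yourself ("that parameter-geometric analysis \ldots is where the real work lies"), which is an admission that the proof is not there. There is also a secondary soft spot earlier: the claim that $W^s\pitchfork W^u$ at \emph{every} point of $R\setminus I(\delta)$ does not follow from Lemma \ref{hyp}(b) alone, because arcs of $W^u$ (and of the stable manifolds, which contain folded pieces such as the parabola bounding $\Theta_\infty$) that have recently passed through the critical region can be folded outside $I(\delta)$, so the slope dichotomy is not automatic for them.

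For contrast, the paper does not classify the possible locations of tangencies at all. It fixes $\varepsilon>0$, forms the compact invariant set $\Omega_{a^*}(\varepsilon)$ of orbits staying $\varepsilon$-away from $\zeta_0$, and proves (Proposition \ref{hyperbolicity}) that this set is uniformly hyperbolic: one starts from the backward-contracted directions $E^u(x)$ and the bound-period estimate of \cite{SenTak11}, combines them with the expansion outside $I(\delta)$ from Lemma \ref{hyp} to get uniform expansion $\|D_xf_{a^*}^n|E^u(x)\|\geq\lambda^n$ on $\Omega_{a^*}(\varepsilon)$ (Lemmas \ref{start} and \ref{exponential}), and then invokes the criterion of \cite[Lemma 7.3]{WanYou01} together with $|\det Df_{a^*}|<1$. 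Any homoclinic or heteroclinic point off the orbit of $\zeta_0$ has its orbit closure inside some $\Omega_{a^*}(\varepsilon)$, and at points of a hyperbolic set the tangent lines of the invariant manifolds coincide with the hyperbolic splitting, hence are transverse. This is a purely fixed-parameter argument; no unfolding in $a$ is needed, which is precisely the ingredient your outline is missing.
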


Let $\varepsilon>0$ and define
$$\Omega_{a^*}(\varepsilon):=\{x\in \Omega_{a^*}\colon |f_{a^*}^nx-\zeta_0|\geq\varepsilon\ \text{\rm for every $n\in\mathbb Z$}\},$$
which is a compact and $f_{a^*}$-invariant set.

\begin{prop}\label{hyperbolicity}
For any $\varepsilon>0$, 
$\Omega_{a^*}(\varepsilon)$
is a hyperbolic set.\end{prop}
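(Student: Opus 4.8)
**Proof proposal for Proposition 4.4 (hyperbolicity of $\Omega_{a^*}(\varepsilon)$).**

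The plan is to exploit the dichotomy that the only obstruction to uniform hyperbolicity of $\Omega_{a^*}$ is the orbit of the tangency $\zeta_0$, together with the fact that staying $\varepsilon$-away from that orbit forces orbits to behave like orbits of the horseshoe regime. First I would recall from Lemma~\ref{hyp} (and its companion in \cite{Tak11}) that there is $\delta_0>0$ such that outside the critical region $I(\delta_0)$ the map $f_{a^*}$ uniformly expands nearly horizontal vectors and uniformly contracts nearly vertical ones, with cone fields $\mathcal{C}^u$ (slope $\le\sqrt b$) and $\mathcal{C}^s$ (slope $\ge 1/\sqrt b$, say) that are forward- resp.\ backward-invariant along any orbit segment lying in $R\setminus I(\delta_0)$. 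The key geometric input from Section~\ref{geometry} (Proposition~\ref{geo}I) is that inside $I(\delta)$ the unstable manifold $W^u$ is organized around the nested critical regions $\mathcal{C}^{(k)}$, and that $\zeta_0$ is the (unique, by the uniqueness of critical points on $C^2(b)$-curves) critical point whose orbit realizes the tangency. So the heart of the matter is a \emph{bounded recurrence} statement: there is $k_0=k_0(\varepsilon)$ such that every point of $\Omega_{a^*}(\varepsilon)$, and indeed every point of its full orbit, never enters the deep critical region $\mathcal{C}^{(k_0)}$ — because entering $\mathcal{C}^{(k_0)}$ would force the orbit to shadow the critical orbit (hence the orbit of $\zeta_0$) to within $\mathcal{O}(b^{k_0/2})<\varepsilon$, by the distance estimates of Proposition~\ref{geo}(Ia)(Ic) and the argument already used in the proof of Proposition~\ref{dich}(b-ii).

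Granting that, the remaining steps are the standard cone-field / bounded-distortion package. Second, I would show that for $x\in\Omega_{a^*}(\varepsilon)$ one still has well-defined invariant cone fields: the time spent in $I(\delta_0)\setminus\mathcal{C}^{(k_0)}$ at any single visit is bounded, and during such a visit the unstable cone suffers only a bounded (by a constant depending on $k_0$, hence on $\varepsilon$) loss of expansion before Lemma~\ref{hyp}(a)/Proposition~\ref{geo}(IIa) restores both the horizontal slope and exponential growth in a uniformly bounded number $p$ of steps. Concretely, using the binding argument of (IIa) with the critical point on the relevant component of $\mathcal{C}^{(k_i)}$ (now $k_i\le k_0$ uniformly), one gets $\|D_xf^{n}v\|\ge C(\varepsilon)e^{\lambda' n}\|v\|$ for $v$ in the unstable cone, with $\lambda'>0$ and $C(\varepsilon)>0$, for every $x\in\Omega_{a^*}(\varepsilon)$ and $n\ge 0$; symmetrically for the stable cone under $f^{-1}$ (here one uses the strong dissipation $|\det Df|\le Cb$ to pass from unstable expansion plus area contraction to stable contraction, exactly as in Lemma~\ref{simple}). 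Third, transversality of the two cone fields, i.e.\ a uniform lower bound on the angle between the unstable and stable directions over $\Omega_{a^*}(\varepsilon)$, follows from the fact that the unstable direction stays within slope $\sqrt b$ while the stable direction stays within slope $\ge 1/\sqrt b$; away from the tangency orbit these cones genuinely do not collapse, and the quantitative $\varepsilon$-separation is what prevents the collapse that occurs at $\zeta_0$. Putting the uniform expansion/contraction together with the uniform angle bound and standard arguments (e.g.\ \cite{Kat80,Pes76,Rue79} or the classical cone criterion) gives a continuous $Df_{a^*}$-invariant splitting $T_x\mathbb{R}^2=E^s_x\oplus E^u_x$ over $\Omega_{a^*}(\varepsilon)$ with the required uniform estimates, which is exactly hyperbolicity of $\Omega_{a^*}(\varepsilon)$.

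The main obstacle I anticipate is the bounded-recurrence step: making precise that an orbit in $\Omega_{a^*}(\varepsilon)$ cannot visit $\mathcal{C}^{(k_0)}$ for $k_0$ large, uniformly in the point. This requires that the critical point organizing the component of $\mathcal{C}^{(k_0)}$ an orbit would enter has its own forward orbit $\mathcal{O}(b^{k_0/2})$-close to the orbit of $\zeta_0$ — which in turn rests on the uniqueness of the tangency being \emph{consistent across scales}, i.e.\ all the nested critical points $\zeta^{(k)}$ converging to a single critical orbit, namely that of $\zeta_0$. This is essentially the content of the nested construction in Proposition~\ref{geo}I together with Proposition~\ref{geo}II(c) ($|\zeta^{(k)}-\zeta^{(k+1)}|=\mathcal{O}(b^{k/2})$), so the work is to assemble these into the uniform statement and to track the dependence of all constants on $\varepsilon$ (equivalently on $k_0$) rather than on the orbit. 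Once that is in hand, everything else is the routine hyperbolic-set bookkeeping sketched above, and Proposition~\ref{hyperbolicity} follows. (Theorem~B is then deduced by letting $\varepsilon\to 0$: a non-transverse homoclinic/heteroclinic point not on the orbit of $\zeta_0$ would lie in some $\Omega_{a^*}(\varepsilon)$, contradicting hyperbolicity there.)
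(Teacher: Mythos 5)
Your key step --- the ``bounded recurrence'' claim that every orbit in $\Omega_{a^*}(\varepsilon)$ avoids the deep critical region $\mathcal C^{(k_0)}$ for some $k_0=k_0(\varepsilon)$ --- is where the proposal breaks down, and the justification you offer rests on a misreading of the critical structure. The nested construction of Proposition \ref{geo} produces a \emph{countable} set $\mathcal C$ of critical points, one on each horizontal boundary of each component of each $\mathcal C^{(k)}$; Proposition \ref{geo}(IIc) only says that, within a single nested chain of components, consecutive critical points are $\mathcal O(b^{k/2})$-close. It does not say that all these critical points, or their forward orbits, accumulate on the orbit of $\zeta_0$. Hence a point of $\Omega_{a^*}(\varepsilon)$ entering $\mathcal C^{(k_0)}$ is merely close to \emph{some} critical point, which yields no contradiction with being $\varepsilon$-far from the orbit of $\zeta_0$; no avoidance of deep critical regions follows, and the paper neither proves nor needs such a statement. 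What the paper actually does is different: it imports from \cite{SenTak11} (i) an invariant unstable direction $E^u(x)$ at \emph{every} $x\in\Omega_{a^*}$ with backward contraction \eqref{eu}, and (ii) a bound period $p(x)$ with expansion recovery \eqref{delest} at every $x\in I(\delta)\cap(\Omega_{a^*}\setminus\{\zeta_0\})$; the only role of the $\varepsilon$-separation from the orbit of $\zeta_0$ is to make $n_0(x)$ and $p_{\rm sup}(\varepsilon)=\sup\{p(x)\colon x\in\Omega_{a^*}(\varepsilon)\cap I(\delta)\}$ finite and uniform (Lemma \ref{start}), which yields uniform expansion $\|D_xf_{a^*}^n|E^u(x)\|\geq\lambda^n$ (Lemma \ref{exponential}) and then hyperbolicity via \cite[Lemma 7.3]{WanYou01} together with $|\det Df_{a^*}|<1$.

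There is a second gap: the cone-field package you invoke does not survive passages through $I(\delta)$. Near the tangency the relevant piece of $W^s(Q)$ is a parabola whose tangent at $\zeta_0$ coincides with the nearly horizontal tangent of $W^u$, so at points of $\Omega_{a^*}(\varepsilon)$ close to returns the stable direction is nearly horizontal; a stable cone of slopes $\geq 1/\sqrt b$ is not backward invariant there, and the asserted uniform transversality between a slope-$\leq\sqrt b$ unstable cone and a slope-$\geq 1/\sqrt b$ stable cone is false precisely in the region that matters. The lower bound on the angle between the invariant directions over $\Omega_{a^*}(\varepsilon)$ is an output of the hyperbolicity estimate (in the paper it comes packaged in the Wang--Young criterion, fed by expansion along $E^u$ plus area contraction), not an input readable from fixed cones. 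Moreover, during a bound period the image of a horizontal vector leaves the horizontal cone, so even the unstable cone is only eventually restored; quantifying this at $a^*$ is exactly the content of the results of \cite{SenTak11} that the paper cites, and without invoking them you would have to rebuild that machinery, which is far more than the routine bookkeeping your sketch assumes.
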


Recall that $K_{a^*}=\{x\in\mathbb R^2\colon\text{$\{f_{a^*}^nx\}_{n\in\mathbb Z}$ is bounded}\}$.
Any homoclinic or heteroclinic point of $f_{a^*}$ is contained in $K_{a^*}$, 
and so in $\Omega_{a^*}$ because $K_{a^*}=\Omega_{a^*}$ from the proof of Lemma \ref{K}.
Hence, any homoclinic or heteroclinic point of $f_{a^*}$ other than 
$f_{a^*}^n\zeta_0$ $(n\in\mathbb Z)$ is contained in $\bigcup_{\varepsilon>0}\Omega_{a^*}(\varepsilon)$.
Theorem B follows from Proposition \ref{hyperbolicity}.
\medskip

To prove Proposition \ref{hyperbolicity} 
we need two results from \cite{SenTak11}:

\begin{itemize}
\item {\rm {\bf Unstable subspace} (\cite[Proposition 4.1]{SenTak11})}
at each point $x\in \Omega_{a^*}$ there exists a one-dimensional subspace $E^u(x)\subset T_x\mathbb R^2$ such that
\begin{equation}\label{eu}
\limsup_{n\to\infty}\frac{1}{n}\log\|D_xf_{a^*}^{-n}|E^u(x)\|<0;\end{equation}

\item {\rm {\bf Bound period} (\cite[Proposition 2.5]{SenTak11})} for each $x\in I(\delta)\cap(\Omega_{a^*}\setminus\{\zeta_0\})$ there exists $p(x)\in\mathbb N$ such that
\begin{equation}\label{delest}
\|D_xf_{a^*}^{p(x)}|E^u(x)\|\geq e^{\frac{\lambda}{3}p(x)}\ \ \text{and}\ \
s(E^u(f_{a^*}^{p(x)}x))\leq\sqrt{b}.\end{equation}
Here, $s(E^u(f_{a^*}^{p(x)}x))=s(v)$ and $v$ is a vector
spanning $E^u(f_{a^*}^{p(x)}x)$.
\end{itemize}
\medskip

\noindent{\it Remark.} Since $f^{-1}$ expands area, the one-dimensional subspace of $T_x\mathbb R^2$ with the property in \eqref{eu} is unique.
\medskip

\noindent{\it Remark.} Any ergodic measure of $f_{a^*}$ is a hyperbolic measure [\cite{CLR08} and Theorem A], and $E^u(\cdot)$ coincides with the subspace 
in the Oseledec decomposition corresponding to positive Lyapunov exponents.\medskip

\noindent{\it Proof of Proposition \ref{hyperbolicity}.} 
We shall find $\lambda>1$ and $N>0$ such that at each $x\in\Omega_{a^*}(\varepsilon)$,
\begin{equation}\label{cocycle}\|D_xf_{a^*}^n|E^u(x)\|\geq \lambda^n \text{ for every }n\geq N.\end{equation}
From \cite[Lemma 7.3]{WanYou01} and $|\det Df_{a^*}|<1$,
it follows that $\Omega_{a^*}(\varepsilon)$ is a hyperbolic set.

 Similarly to \eqref{integer},
to the forward orbit of each $x\in\Omega_{a^*}(\varepsilon)$ we associate a sequence $n_0\leq n_1<n_1+p_1<n_2<n_2+p_2<\cdots$
of integers inductively as follows.
First, define $n_0=\inf\{n\geq0\colon\text{$s(E^u(f_{a^*}^nx))\leq\sqrt{b}$}\}.$
Then, define $n_1$ to be the smallest $n\geq n_0$ with $f_{a^*}^{n}x\in I(\delta)$. 
Given $n_k$ with $f_{a^*}^{n_k}x\in I(\delta)$,
Define
$p_k=p(f_{a^*}^{n_k}x)$.
Define $n_{k+1}$ to be the smallest $n> n_k+p_k$ with $f_{a^*}^nx\in I(\delta)$, and so on.

\begin{lemma}\label{start}
There exists $N=N(\varepsilon)$ such 
that for all $x\in\Omega_{a^*}(\varepsilon)$, $n_0=n_0(x)\leq N$.
\end{lemma}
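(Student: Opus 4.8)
The plan is to show that there is a uniform time $N=N(\varepsilon)$ before which the unstable direction of any orbit in $\Omega_{a^*}(\varepsilon)$ has already been brought to a nearly-horizontal slope. First I would observe that the obstruction to $s(E^u(f_{a^*}^n x))\leq\sqrt{b}$ can only come from the folding mechanism inside the critical region $I(\delta)$: as long as the orbit segment $x,f_{a^*}x,\dots,f_{a^*}^{n-1}x$ stays in $R\setminus I(\delta)$, Lemma \ref{hyp}(a) (together with the analogous control on $fR\setminus R$ from the modification requirements in Sect.\ref{initial}) shows that any vector with slope $\leq\sqrt b$ keeps slope $\leq\sqrt b$, and moreover the uniform expansion forces the backward-contracting direction $E^u$ itself to acquire slope $\leq\sqrt b$ after a bounded number of steps — because the cone field $\{s(v)\leq\sqrt b\}$ is forward-invariant and uniformly contracted, so $E^u(x)=\bigcap_{n\geq 0}D_{f^{-n}x}f^n\{\text{cone}\}$ lies in it once $n$ exceeds the (uniform) contraction time needed to collapse the full cone.

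The only way to delay this is for the orbit to enter $I(\delta)$ quickly. So the key dichotomy is: either $f_{a^*}^j x\notin I(\delta)$ for all $0\le j< m$ for some uniform $m$, in which case $n_0(x)\le m$ by the cone argument above; or $f_{a^*}^j x\in I(\delta)$ for some $j<m$. In the latter case, since $x\in\Omega_{a^*}(\varepsilon)$ we have $|f_{a^*}^j x-\zeta_0|\ge\varepsilon$, so $f_{a^*}^j x$ is a point of $I(\delta)$ at definite distance $\geq\varepsilon$ from the unique tangency point $\zeta_0$. At such a point the "bound period" estimate \eqref{delest} applies with $p(f_{a^*}^j x)$ bounded above by a constant depending only on $\varepsilon$ (the bound period is a logarithmic function of the distance to the critical set / to $\zeta_0$, hence $\le C\log(1/\varepsilon)$), and it yields $s(E^u(f_{a^*}^{j+p(f_{a^*}^j x)}x))\leq\sqrt b$. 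Thus $n_0(x)\le j+p(f_{a^*}^j x)\le m+C\log(1/\varepsilon)=:N(\varepsilon)$.

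The main obstacle is making the phrase "the bound period is bounded in terms of $\varepsilon$" precise and uniform: one must trace through the construction of $p(\cdot)$ in \cite[Proposition 2.5]{SenTak11} to confirm that, away from an $\varepsilon$-neighborhood of $\zeta_0$, the binding time is controlled purely by the $C^2(b)$-geometry near $\zeta_0$ and the distance $\varepsilon$, with no hidden dependence on the orbit. The cone-collapse step is routine hyperbolic-theory bookkeeping (it is essentially contained in Lemma \ref{hyp}(a) and the modification axioms), and the distance bound $|f_{a^*}^j x-\zeta_0|\ge\varepsilon$ is immediate from membership in $\Omega_{a^*}(\varepsilon)$; so the proof reduces to assembling these two ingredients and taking $N(\varepsilon)$ to be their maximum.
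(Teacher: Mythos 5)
There is a genuine gap, and it sits in the first branch of your dichotomy. The slope of $E^u$ at a point is determined by the \emph{backward} history of that point (recall that $E^u$ is characterized by the backward contraction \eqref{eu}, and $E^u(f^nx)=D_xf^nE^u(x)$), so forward avoidance of $I(\delta)$ cannot by itself force $s(E^u(f^jx))\leq\sqrt b$ within a bounded time. The cone-collapse identity you invoke, $E^u(x)=\bigcap_{n\geq0}D_{f^{-n}x}f^n\{s\leq\sqrt b\}$, is only available when the \emph{backward} orbit segments $f^{-n}x,\dots,x$ stay in the region where the cone field is invariant, i.e.\ outside $I(\delta)$, and this is exactly what you have not arranged. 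Concretely, the implication ``$f^jx\notin I(\delta)$ for $0\leq j<m$ with $m$ uniform $\Rightarrow n_0(x)\leq m$'' is false: a point $x$ whose backward orbit has just passed through $I(\delta)$ at distance comparable to $\varepsilon$ from the tangency (a point in the middle of a bound period, lying near a fold of $W^u$, hence near $\partial^sR$) has $E^u$ far from horizontal, and the slope is only restored at the end of the bound period, a time of order $\log(1/\varepsilon)$ which exceeds any $\varepsilon$-independent $m$; meanwhile its forward orbit may well stay outside $I(\delta)$ throughout. So your branch (i) cannot be closed by Lemma \ref{hyp} and cone bookkeeping alone.

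This missing case is precisely what the paper's proof handles by a different splitting: it distinguishes whether $x$ lies in a $\delta^3$-neighborhood $B(\delta^3)$ of the stable boundary $\partial^sR$ (the ``bound state'' points) or not. For $x\notin B(\delta^3)$ the forward orbit reaches the region $\Theta$ in a time $C(\delta)$ bounded uniformly, and the horizontality of $E^u$ there is not derived from a forward cone argument but is quoted from \cite[Proposition 4.1]{SenTak11}. For $x\in B(\delta^3)$ one goes \emph{backward}: there is $y=f_{a^*}^{-m}x\in I(\delta)$ with the intermediate orbit in $B(\delta^3)$, and \eqref{delest} applied to $y$ gives $s(E^u(f_{a^*}^{p(y)}y))\leq\sqrt b$, whence $n_0(x)\leq p(y)-m\leq p_{\rm sup}(\varepsilon)$, with $p_{\rm sup}(\varepsilon)<\infty$ by \cite[Sect.2]{SenTak11}. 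Your branch (ii) is sound and corresponds to this last ingredient (your bound $p\leq C\log(1/\varepsilon)$ is a sharper form of $p_{\rm sup}(\varepsilon)<\infty$), but to complete the proof you must replace the forward cone-collapse step by an argument that inspects the backward orbit (or otherwise locates $E^u$, as in \cite[Proposition 4.1]{SenTak11}) for points still in a bound state.
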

\begin{proof}
Let $\alpha_1^+$ denote the connected component of $R_{a^*}\cap
W^s(P_{a^*})$ containing 
$P_{a^*}$.
Let $\alpha_1^-$ denote the connected component of $R_{a^*}\cap
f^{-1}\alpha_1^+$ not containing $P_{a^*}$.
Let $\Theta$ denote the compact domain bordered by $\alpha_1^+$, $\alpha_1^-$ and the two boundary curves of $R$ formed by $W^u$.
Let $\partial^sR$ denote the union of the two boundary curves of $R$ formed by $W^s(Q_{a^*})$, and let
$B(\delta^3)=\{x\in\mathbb R^2\colon{\rm dist}(x,\partial^sR)\leq\delta^3\}.$

Let $x\in\Omega_{a^*}$.
If $x\notin B(\delta^3)$, then let $\hat n(x)$ denote the minimal $n\geq0$ with
$f_{a^*}^nx\in\Theta$. 
Clearly, there exists $C(\delta)>0$ independent of $x$ such that
$\hat n(x)\leq C(\delta)$.
By  \cite[Proposition 4.1]{SenTak11}, $s(E^u(f_{a^*}^{\hat n(x)}x))\leq\sqrt{b}$, and so 
$n_0(x)\leq\hat n(x)\leq C(\delta)$.
If $x\in B(\delta^3)$, then there exists $m>0$ such that $y:=f_{a^*}^{-m}x\in I(\delta)$
and $f_{a^*}^iy\in   B(\delta^3)$ for $1\leq i\leq m$.
If $p(y)\leq m$, then $s(E^u(x))\leq\sqrt{b}$,
and so $n_0(x)=0$. If $p(y)>m$, then $n_0(x)\leq p(y)-m$ because
$s(E^u(f_{a^*}^{p(y)}y))\leq\sqrt{b}$ .
Set $$p_{\rm sup}(\varepsilon)=\sup\{p(x)\colon x\in\Omega_{a^*}(\varepsilon)\cap I(\delta)\}.$$
By a result of \cite[Sect.2]{SenTak11}, $p_{\rm sup}(\varepsilon)<\infty$.
Set $N=\max\{C(\delta),p_{\rm sup}(\varepsilon)\}$.
\end{proof}

\begin{lemma}\label{exponential}
For any $\varepsilon>0$ there exists $C>0$ such that 
for each $x\in\Omega_{a^*}(\varepsilon)$,
$$\|D_{f_{a^*}^{n_0(x)}x}f_{a^*}^n|E^u(f_{a^*}^{n_0(x)}x)\|\geq Ce^{\frac{\lambda}{4}(n-n_0(x))} \text{ for every }n\geq n_0(x).$$
\end{lemma}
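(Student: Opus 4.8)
\textbf{Proof proposal for Lemma \ref{exponential}.}
The plan is to follow the bound/free scheme already set up for $\{f_a\colon a\in\Delta\}$ in Sect.\ref{shrink}, now applied at the single parameter $a^*$ using the \emph{Unstable subspace} and \emph{Bound period} results quoted from \cite{SenTak11}. Fix $x\in\Omega_{a^*}(\varepsilon)$ and write $y=f_{a^*}^{n_0(x)}x$, so that by definition of $n_0$ we have $s(E^u(y))\leq\sqrt b$. Consider the sequence $n_0\leq n_1<n_1+p_1<n_2<\cdots$ associated to the forward orbit of $x$ as in the excerpt. The orbit of $y$ is thereby split into free segments, on the time intervals $[n_k+p_k,n_{k+1}]$, during which the orbit stays outside $I(\delta)$ with a tangent direction of slope $\leq\sqrt b$, and bound segments, on the intervals $[n_k,n_k+p_k]$, which begin at a return to $I(\delta)$.

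The key step is a uniform derivative estimate along $E^u$ for each type of segment. On a free segment, Lemma \ref{hyp}(a) (together with the modification of $f$ outside $R\cup fR$ recorded in Sect.\ref{initial}, which guarantees expansion by $2^n$ once an orbit has escaped $R$) gives
\begin{equation*}
\|D_{f_{a^*}^{n_k+p_k}x}f_{a^*}^{\,n_{k+1}-n_k-p_k}|E^u(f_{a^*}^{n_k+p_k}x)\|\geq e^{\hat\lambda\,(n_{k+1}-n_k-p_k)},
\end{equation*}
using that $E^u$ has slope $\leq\sqrt b$ at the start of the segment; and if $f_{a^*}^{n_{k+1}}x\in I(\delta)$ the same estimate applies without the $\delta$ prefactor. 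On a bound segment, the \emph{Bound period} estimate \eqref{delest} gives directly
\begin{equation*}
\|D_{f_{a^*}^{n_k}x}f_{a^*}^{\,p_k}|E^u(f_{a^*}^{n_k}x)\|\geq e^{\frac{\lambda}{3}p_k},\qquad s(E^u(f_{a^*}^{n_k+p_k}x))\leq\sqrt b,
\end{equation*}
so the orbit re-enters a free segment with slope $\leq\sqrt b$ and the induction continues. Multiplying these estimates along the orbit via the chain rule, and absorbing the finitely many bad factors coming from the first partial segment $[n_0,n_1]$ (bounded below by $\|D f_{a^*}\|^{-(n_1-n_0)}$, with $n_1-n_0$ controlled in terms of $\delta$ alone since a $C^2(b)$-curve through $y$ must hit $I(\delta)$ within a $\delta$-dependent number of steps by the uniform expansion of Lemma \ref{hyp}) into the constant $C$, we obtain that $\|D_y f_{a^*}^{\,n-n_0}|E^u(y)\|$ grows at rate at least $\min\{\hat\lambda,\lambda/3\}=\lambda/3>\lambda/4$ in the free+bound decomposition, with a loss only on the last, possibly incomplete, bound segment; choosing the exponent $\lambda/4$ rather than $\lambda/3$ leaves room to absorb that last incomplete-bound-period loss into $C$ as well, since $p_k$ is uniformly bounded on $\Omega_{a^*}(\varepsilon)\cap I(\delta)$ by $p_{\rm sup}(\varepsilon)<\infty$.

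The main obstacle I anticipate is the bookkeeping at the endpoints of the orbit segment: one must ensure that a partial bound period that is still running at time $n$ costs at most a bounded multiplicative factor, and this is exactly where the finiteness $p_{\rm sup}(\varepsilon)<\infty$ (from \cite{SenTak11}) and the exclusion of $\zeta_0$ (so that bound periods are finite at all, i.e. \eqref{delest} applies at every return) are essential — this is why the statement is for $\Omega_{a^*}(\varepsilon)$ and not all of $\Omega_{a^*}$. A secondary point is verifying that the orbit genuinely alternates between the two segment types, i.e. that at the end of each bound period the slope condition $s(E^u)\leq\sqrt b$ is restored so that the free-segment estimate of Lemma \ref{hyp}(a) is applicable; this is built into \eqref{delest}, but one should check that a return to $I(\delta)$ that happens to land very close to a critical point still falls under the hypothesis of the \emph{Bound period} proposition, which it does since that proposition is stated for every $x\in I(\delta)\cap(\Omega_{a^*}\setminus\{\zeta_0\})$ with no proximity restriction.
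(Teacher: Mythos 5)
Your plan is essentially the paper's proof: decompose the forward orbit of $f_{a^*}^{n_0}x$ into free and bound segments, apply Lemma \ref{hyp}(a) on the free parts (the slope condition being restored at the end of each bound period by \eqref{delest}), apply \eqref{delest} on complete bound periods, and absorb the loss of a final, still running bound period into $C$ via $p_{\rm sup}(\varepsilon)<\infty$. However, your treatment of the initial segment $[n_0,n_1]$ is both unnecessary and wrongly justified. By the definitions of $n_0$ and $n_1$, on that segment the orbit lies in $R\setminus I(\delta)$ and $s(E^u)\leq\sqrt b$, so Lemma \ref{hyp}(a) already gives expansion at least $\delta e^{\hat\lambda j}$ after $j$ steps: it is an ordinary free segment, and the only price is the single factor $\delta$, which goes into $C$ (this is exactly how the paper handles every time $n$ not interior to a bound period, obtaining $\delta e^{\frac{\lambda}{3}(n-n_0)}$). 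Your claim that $n_1-n_0$ is ``controlled in terms of $\delta$ alone'' is false: points of $\Omega_{a^*}(\varepsilon)$, for instance the fixed saddles, never enter $I(\delta)$, and nearby points enter only after arbitrarily many iterates, so $n_1-n_0$ is not uniformly bounded and $n_1$ need not even be defined; the curve-growth argument you invoke controls when an iterated $C^2(b)$-\emph{curve} meets $I(\delta)$, not when the orbit of your particular point does. If $C$ genuinely had to absorb a factor indexed by $n_1-n_0$, the lemma would not follow; the argument works only because that segment is in fact expanding. (The remark about expansion by $2^n$ after escaping $R$ is irrelevant: $\Omega_{a^*}\subset R$ is invariant, so these orbits never leave $R$.)

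The second point to repair is the per-step bound for your ``bad factors'': $\|Df_{a^*}\|^{-1}$ is not a lower bound for the minimal expansion of a strongly dissipative diffeomorphism, since for a $2\times2$ matrix $A$ the minimal expansion is $\|A^{-1}\|^{-1}=|\det A|/\|A\|$, here of order $b$ and much smaller than $\|A\|^{-1}$. This quantity is precisely what the paper uses to control the incomplete bound period: with $C_0=\inf\{|\det D_yf_{a^*}|/\|D_yf_{a^*}\|\colon y\in\Omega_{a^*}\}>0$, every unit vector satisfies $\|D_yf_{a^*}u\|\geq C_0$, so the derivative over the at most $p_{\rm sup}(\varepsilon)$ steps elapsed inside the running bound period is bounded below by $C_0^{p_{\rm sup}(\varepsilon)}$, and the elementary comparison $e^{\frac{\lambda}{3}(n_k-n_0)}\geq e^{\frac{\lambda}{4}(n-n_0)}e^{-\frac{\lambda}{4}p_{\rm sup}(\varepsilon)}$ then yields the stated rate with $C=\min\{\delta,(C_0e^{-\lambda/4})^{p_{\rm sup}(\varepsilon)}\}$. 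Your appeal to ``a bounded multiplicative factor'' is the right idea, but you must supply this bound (equivalently, compactness of $\Omega_{a^*}$ together with invertibility of $Df_{a^*}$) rather than $\|Df_{a^*}\|^{-(n_1-n_0)}$.
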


\begin{proof}
Set
$C_0=\inf\{|\det D_yf_{a^*}|/\|D_yf_{a^*}\|\colon {y\in\Omega_{a^*}}\}$. Observe that $C_0\in(0,1)$.
For any unit vector $u$ at $y\in\Omega_{a^*}$ we have 
\begin{equation}\label{u}\|D_yf_{a^*}u\|\geq|\det D_yf_{a^*}|/\|D_yf_{a^*}\|
\geq C_0.\end{equation}

Let $x\in\Omega_{a^*}(\varepsilon)$ and $n\geq n_0$. If 
$n\notin\cup_{k=1}^\infty(n_k,n_k+p_k)$, 
then by Lemma \ref{hyp} and \eqref{delest}, $$\|D_{f_{a^*}^{n_0}x}f_{a^*}^{n-n_0}E^u(f_{a^*}^{n_0}x)\|\geq \delta e^{\frac{\lambda}{3}(n-n_0)}.$$
If $n\in(n_k,n_k+p_k)$ holds for some $k\geq1$, then
since $n-n_k<p_k\leq p_{\rm sup}(\varepsilon)$,
\begin{align*}
\|D_{f_{a^*}^{n_0}x}f_{a^*}^n|E^u(f_{a^*}^{n_0}x)\|&=
\|D_{f_{a^*}^{n_k}x}f_{a^*}^{n-n_k}|E^u(f_{a^*}^{n_k}x)\|\cdot\|D_{f_{a^*}^{n_0}x}f^{n_k-n_0}|E^u(f_{a^*}^{n_0}x)\|\\
&\geq 
C_0^{p_{\rm sup}(\varepsilon)}\cdot e^{\frac{\lambda}{3}(n_k-n_0)}.\end{align*}
To estimate the first factor of the right-hand-side we have used \eqref{u}. We have
\begin{align*}
 \frac{e^{\frac{\lambda}{4}(n-n_0)}}{e^{\frac{\lambda}{4}(n-n_0)}}\cdot e^{\frac{\lambda}{3}(n_k-n_0)}
\geq \frac{e^{\frac{\lambda}{4}(n-n_0)}}{e^{\frac{\lambda}{4}(n_k+p_k-n_0)}}\cdot e^{\frac{\lambda}{3}(n_k-n_0)}\geq \frac{e^{\frac{\lambda}{4}(n-n_0)}}{e^{\frac{\lambda}{4}p_k}}
\geq \frac{e^{\frac{\lambda}{4}(n-n_0)}}{e^{\frac{\lambda}{4}p_{\rm sup}(\varepsilon)}}.
\end{align*}
Set
$C=\min\{\delta, (C_0e^{-\frac{\lambda}{4}})^{p_{\rm sup}(\varepsilon)}\}.$ 
\end{proof}
 \eqref{cocycle} now follows from Lemma \ref{start} and Lemma \ref{exponential}. \qed

\subsection*{Acknowledgments}
Partially supported by the Grant-in-Aid for Young Scientists (B) of the JSPS, Grant No.23740121
and the Keio Gijuku Academic Development Funds 2013.
I thank Masayuki Asaoka for pointing out an error in the proof of Lemma \ref{zeropesin} in a previous version of the paper.

\bibliographystyle{amsplain}

\begin{thebibliography}{10}


\bibitem{AY83}
Alligood, K. T. and Yorke, J. A.: Cascades of period doubling
bifurcations: a prerequisite for horseshoes.  Bull. A.M.S.
{\bf 9},  319--322 (1983)


\bibitem{BedSmi04} Bedford, E. and Smillie, J.: Real polynomial
difeomorphisms with maximal entropy: tangencies. Ann. of
Math. {\bf 160}, 1--25 (2004)

\bibitem{BedSmi06} Bedford, E. and Smillie, J.: Real polynomial
diffeomorphisms with maximal entropy: II. small Jacobian, 
Ergodic Theory and Dynamical Systems. {\bf 26}, 1259--1283 (2006)



\bibitem{BenCar91} Benedicks, M. and Carleson, L.: The dynamics of the
H\'enon map. Ann. of Math. {\bf 133}, 73--169, (1991)


\bibitem{BonDiaGor10} Bonatti, C., D\'iaz, L. J. and Gorodetski, A.: Non-hyperbolic ergodic measures with large support. Nonlinearity, {\bf 23},
687--705 (2010)

\bibitem{CLR08} Cao, Y., Luzzatto, S. and Rios, I.: The boundary of
hyperbolicity for H\'enon-like families. Ergodic Theory and Dynamical Systems {\bf 28}, 
1049-1080 (2008)

\bibitem{CLR06} Cao, Y., Luzzatto, S. and Rios, I.: 
A non-hyperbolic system with non-zero Lyapunov exponents for all
invariant measures: internal tangencies. Discrete and Continuous 
Dynamical Systems. {\bf 15}, 61--71 (2006)

\bibitem{DevNit79} Devaney, R. and Nitecki, Z.: Shift automorphisms in the
H\'enon mapping. Commun. Math. Phys.  {\bf 67}, 137--146 (1979)


\bibitem{Dia95} D\'iaz, L. J.: Persistence of cycles and nonhyperbolic dynamics
at heteroclinic bifurcations. Nonlinearity {\bf 8}, 693--713 (1995)

\bibitem{DiaGor09} D\'iaz, L. J. and Gorodetski, A.: Non-hyperbolic ergodic measures 
for non-hyperbolic homoclinic classes. Ergodic Theory and Dynamical Systems, {\bf 29},
1479--1513 (2009)

\bibitem{DiaRoc98} D\'iaz, L. J. and Rocha, J.: Large measure of hyperbolic dynamics
when unfolding heteroclinic cycles. Nonlinearity {\bf 10}, 857--884 (1998)

\bibitem{DiaRoc02} D\'iaz, L. J. and Rocha, J.: Heterodimensional cycles, partial hyperbolicity
and limit dynamics. Fundamenta Mathematicae. {\bf 184}, 127--186 (2002)

\bibitem{DRV96} D\'iaz, L. J., Rocha , J. and Viana M.:
Strange attractors in saddle-node cycles: prevalence and globality.
Invent. Math. {\bf 125}, 37--74 (1996)


\bibitem{GS73} Gavrilov, N. and Silnikov, L.:
On the three dimensional dynamical systems close to a system with
a structurally unstable homoclinic curve. I. Math. USSR Sbornik
{\bf 17}, 467--485 (1972); II. Math. USSR Sbornik {\bf 19}, 139--156
(1973)

\bibitem{Hay97} Hayashi, S.: Connecting invariant manifolds and the solution of the $C^1$ stability 
and $\Omega$-stability conjectures for flows. Ann. of Math. \textbf{145}, 81--137 (1997)

\bibitem{Jak81} Jakobson, M.:
Absolutely continuous invariant measures for one-parameter
families of one-dimensional maps.  Commun. Math. Phys.  {\bf 81},
39--88 (1981)


\bibitem{Kat80} Katok, A.;
Lyapunov exponents, entropy and periodic orbits for
diffeomorphisms, Publ. Math. Inst. Hautes \'Etud.
    Sci. {\bf 51}, 
137--173 (1980)


\bibitem{Mane88} Ma\~{n}\'e, R.: A proof of the $C^1$ stability conjecture.
Publ. Math. Inst. Hautes \'Etud. Sci. \textbf{66}, 161--210 (1988)


\bibitem{MorVia93} Mora, L. and Viana, M.: Abundance of strange attractors.
Acta Math. {\bf 171}, 1--71 (1993)




\bibitem{MorYoc10} Moreira, C. G. and Yoccoz, J.-C.: Tangences homoclines
stables pour des ensembles hyperboliques de grande dimension fractale.
Annales scientifiques de l'ENS {\bf 43}, 1--68 (2010)



\bibitem{NP}
Newhouse, S. and Palis, J.: Cycles and bifurcation theory.
Ast\'erisque {\bf 31},  44--140 (1976)

\bibitem{NowSan98} Nowicki, T. and Sands, D.:
Non-uniform hyperbolicity and universal bounds for S-unimodal
maps. Invent. Math. {\bf 132},  633--680 (1998)


\bibitem{Ose68} Oseledec, V.: A multiplicative ergodic theorem:
Lyapunov characteristic numbers for dynamical systems,
Trans. Moskow Math. Soc. {\bf 19}, 197--231 (1968)



\bibitem{PT0}
Palis, J. and Takens, F.: Cycles and measure of bifurcation sets for
two-dimensional diffeomorphisms. Invent. Math. {\bf 82},
397--422 (1985)

\bibitem{PT1}
Palis, J. and Takens, F.: Hyperbolicity and the creation of
homoclinic orbits.  Ann. Math. {\bf 125}, 337--374 (1987)


\bibitem{PalTak93}
Palis, J. and Takens, F.: {\it Hyperbolicity \& sensitive chaotic
dynamics at homoclinic bifurcations.} Cambridge Studies in
Advanced Mathematics {\bf 35}. Cambridge University Press, 1993




\bibitem{PY1} Palis, J. and Yoccoz, J.-C.: Homoclinic tangencies for
hyperbolic sets of large Hausdorff dimension. Acta. Math. {\bf 172},
91--136 (1994)

\bibitem{PY2} Palis, J. and Yoccoz, J.-C.:
Fers \`a cheval non uniform\'ement hyperboliques engendr\'es par
une bifurcation homocline et densit\'e nulle des attracteurs.
C. R. Acad. Sci. Paris S\'er. I Math. {\bf 333}, 867--871 (2001)


\bibitem{PY3} Palis, J. and Yoccoz, J.-C.:
Non-uniformly hyperbolic horseshoes arising from bifurcations of
Poincar\'e heteroclinic cycles. Publ. Math. Inst. Hautes \'Etud.
    Sci. {\bf 110}, 1--217 (2009)







\bibitem{Pes76}  Pesin, Ya.: Families of invariant manifolds
which correspond to nonvanishing Lyapunov exponents. Math.
USSR-Izv. {\bf 10}, 1261--1305 (1976)





\bibitem{PrzRivSmi03} Przytycki, F., Rivera-Letelier, J. and Smirnov, S.:
Equivalence and topological invariance of conditions for non-uniform hyperbolicity
in the iteration of rational maps. Invent. Math. {\bf 151}, 29--63 (2003)


\bibitem{Rio01} Rios, I.: Unfolding homoclinic tangencies inside horseshoes: hyperbolicity, fractal dimensions and persistent tangencies. Nonlinearity  {\bf 14}, 431--462 (2001)


\bibitem{Rob71} Robbin, J.: A structural stability theorem, Ann. of Math. \textbf{94} (1971), 447--493

\bibitem{Rob73} Robinson, C.: Structural stability of vector fields, Ann. of Math. \textbf{99} (1974), 154--175

\bibitem{Rob76} Robinson, C.: Structural stability of $C^1$ diffeomorphisms, J. Diff. Equ. \textbf{22} (1976), 28--73


\bibitem{Rue79} Ruelle, D.: Ergodic theory of differentiable
dynamical systems. Publ. Math. Inst. Hautes \'Etud. Sci. {\bf 50},
27--58 (1979)

\bibitem{SenTak11} Senti, S. and Takahasi, H.:
Equilibrium measures for the H\'enon map at the first bifurcation. Nonlinearity {\bf 26}, 1719-1741 (2013)


\bibitem{Tak12b}
Takahasi, H.: Abundance of nonuniform hyperbolicity in bifurcations
of surface endomorphisms.  Tokyo J. Math.  {\bf 34}, 53--113 (2011)  


\bibitem{Tak11} Takahasi, H.: Prevalent dynamics at the first
bifurcation of H\'enon-like families Commun. Math. Phys. {\bf 312}, 37--85 (2012)





\bibitem{WanYou01} Wang, Q. D. and Young, L.-S.: Strange attractors with one
direction of instability. Commun. Math. Phys. {\bf 218}, 1--97
(2001)
















\end{thebibliography}

\end{document}